\documentclass[reqno]{amsart}

\oddsidemargin 0cm \evensidemargin 0cm \topmargin -0.5cm
\textwidth 15cm \textheight 21cm










\newtheorem{thm}{Theorem}[section]
\newtheorem{lem}[thm]{Lemma}

\newtheorem{prop}[thm]{Proposition}

\newtheorem{rem}[thm]{Remark}


\numberwithin{equation}{section}

\newcommand{\R}{\mathbb{R}}
\newcommand{\C}{\mathbb{C}}

\newcommand{\N}{\mathbb{N}}
\newcommand{\A}{\mathbb{A}}

\newcommand{\E}{\mathbb{E}}

\newcommand{\ml}{\mathcal{L}}

\newcommand{\rd}{\mathrm{d}}

\newcommand{\bqn}{\begin{equation}}
\newcommand{\eqn}{\end{equation}}
\newcommand{\bqnn}{\begin{equation*}}
\newcommand{\eqnn}{\end{equation*}}
\newcommand{\bear}{\begin{eqnarray}} 
\newcommand{\eear}{\end{eqnarray}} 
\newcommand{\bean}{\begin{eqnarray*}} 
\newcommand{\eean}{\end{eqnarray*}} 
\newcommand{\bs}{\begin{split}}
\newcommand{\es}{\end{split}}

\newcommand{\dhr}{\mathrel{\lhook\joinrel\relbar\kern-.8ex\joinrel\lhook\joinrel\rightarrow}}

\begin{document}

\title[Age-Structured Diffusive Populations]{Some Remarks About the Semigroup Associated to Age-Structured Diffusive Populations}

\author{Christoph Walker}
\email{walker@ifam.uni-hannover.de}
\address{Leibniz Universit\"at Hannover\\ Institut f\" ur Angewandte Mathematik \\ Welfengarten 1 \\ D--30167 Hannover\\ Germany}

\begin{abstract}
We consider linear age-structured population equations with diffusion. 
Supposing maximal regularity of the diffusion operator, we characterize the generator and its spectral properties of the associated strongly continuous semigroup. In particular, we provide conditions for stability of the zero solution and for asynchronous exponential growth.
\end{abstract}

\keywords{Age structure, maximal regularity, semigroups of linear operators, asynchronous exponential growth.}
\subjclass[2010]{47D06, 92D25, 47A10, 34G10}

\maketitle

\section{Introduction}

This work is dedicated to age-structured diffusive population dynamics governed by the abstract linear equations
\begin{align}
&\partial_t u\,+\, \partial_au \, +\,     A(a)\,u\, =0\ , && t>0\ ,\quad a\in (0,a_m)\ ,\label{1}\\ 
&u(t,0)\, =\, \int_0^{a_m}b(a)\, u(t,a)\, \rd a\ ,& & t>0\ ,  \label{2}\\
&u(0,a)\, =\,  \phi(a)\ , & &\hphantom{t>0\ ,\quad} a\in (0,a_m)\ .\label{3}
\end{align}
Here, $u=u(t,a)$ is a function taking positive values in some ordered Banach space $E_0$. In applications it represents the density at time $t$ of a population of individuals structured by age $a\in J:=[0,a_m)$, where $a_m\in (0,\infty]$ is the maximal age. Note that the age interval $J$ may be unbounded. For fixed $a\in J$, the operator 
\bqn\label{AA}
A(a)=A_0(a)+\mu(a)
\eqn
involves spatial movement of individuals described by $A_0(a)$  and death processes of individuals with mortality rate $\mu(a)$ and is assumed to be an (unbounded) linear operator $A(a):E_1\subset E_0\rightarrow E_0$. The nonlocal age-boundary condition \eqref{2} represents birth processes with birth rate $b$ while $\phi$ in \eqref{3} describes the initial population.

Equations \eqref{1}-\eqref{3} and variants thereof, e.g. for constant or time-dependent operators $A$, have been investigated by many authors, for example see \cite{GuoChan,Huyer,Rhandi,RhandiSchnaubelt_DCDS99,ThiemeDCDS,WalkerDIE07,WebbSpringer} and the references therein though this list is far from being complete.

Recall, e.g. from \cite{WebbSpringer}, that a strongly continuous semigroup in $L_1(J,E_0)$ can be associated with \eqref{1}-\eqref{3} if $A$ is independent of age and generates itself a strongly continuous semigroup on $E_0$, see also \cite{GuoChan,WalkerDIE07}. This is derived upon formally integrating \eqref{1} along characteristics giving the semigroup rather explicitly. The approach has been extended to investigate the well-posedness of models featuring nonlinearities in the operator $A=A(t,u)$ or in the birth rates $b=b(t,u)$ \cite{WalkerEJAM08,WalkerDCDSA10}.

A slightly different approach has been chosen in \cite{ThiemeDCDS}. On employing methods for positive perturbations of semigroups it has been shown that a strongly continuous semigroup for \eqref{1}-\eqref{3} in $L_1(J,E_0)$ is obtained as the derivative of an integrated semigroup. Moreover, this strongly continuous semigroup is shown to enjoy certain compactness properties and to exhibit asynchronous exponential growth, i.e. it stabilizes as
$t\rightarrow\infty$ to a one-dimensional image of the state space of initial values, after
multiplication by an exponential factor in time. This result has been recovered as a particular case in  \cite{RhandiSchnaubelt_DCDS99} (also see \cite{Rhandi}), where time-dependent birth rates have been included by means of perturbation techniques of Miyadera type. It is noteworthy that the general results of \cite{ThiemeDCDS} apply as well to other situations than $A_0$ describing spatial diffusion.

The strongly continuous semigroup from \cite{WebbSpringer,WalkerDIE07} associated  to \eqref{1}-\eqref{3} has the advantage that certain properties --- like regularizing effects in the case that $-A$ generates an analytic semigroup, being of utmost importance in nonlinear equations, see \cite{WalkerEJAM08,WalkerDCDSA10} --- can be read off its formula rather easily, see Theorem~\ref{Prop1} below and the subsequent remarks. The domain of the generator of this semigroup is in general not fully identified, cf. \cite{WalkerDIE07,WalkerEJAM08}.
The objective of the present paper is to characterize the (domain of the) infinitesimal generator of the strongly continuous semigroup associated  to \eqref{1}-\eqref{3} and  to investigate its spectral properties in the case that the operator $A$ has the property of maximal $L_p$-regularity. This assumption is satisfied in many applications, e.g. when $A_0$ in \eqref{AA} is a second order elliptic differential operator in divergence form. Maximal regularity provides an adequate functional analytic setting for the characterization of the generator of the semigroup associated  to \eqref{1}-\eqref{3} in the phase space $L_p(J,E_0)$ and its resolvent, see Theorem~\ref{C1} below. Knowing the generator precisely, we shall then investigate its growth bound and derive a stability result for the trivial solution, see Theorem~\ref{T1} below. We also provide in Theorem~\ref{C14} a condition that implies asynchronous exponential growth of the semigroup. 

Besides a precise description of the semigroup with its generator and stability of the trivial solution, we thus obtain a similar result on asynchronous exponential growth as in~\cite{ThiemeDCDS, RhandiSchnaubelt_DCDS99} by another approach being  inspired by the results in \cite{WebbBook} that were dedicated to the non-diffusive scalar case.  We shall point out, however, that the results and the approach presented herein shall serve as a basis for a future investigation of qualitative aspects of solutions to models featuring nonlinearities in the diffusion part and in the age-boundary condition, i.e. for diffusion operators of the form $A=A(u)$ and birth rates $b=b(u)$, by means of linearization and perturbation techniques.
Finally, from a technical point of view it seems to be worthwhile to point out that the cases of a finite or infinite maximal age $a_m$ is treated simultaneously herein.

\section{The Semigroup and its Generator}

\subsection{Notation and Assumptions}

Given a closed linear operator $\mathcal{A}$ on a Banach space, we let $\sigma(\mathcal{A})$ and $\sigma_p(\mathcal{A})$ denote its spectrum and point spectrum, respectively. The essential spectrum
 $\sigma_e(\mathcal{A})$ of $\mathcal{A}$ consists of those spectral points $\lambda$ of $\mathcal{A}$ such that the image $\mathrm{im}(\lambda-\mathcal{A})$ is not closed, or $\lambda$ is a limit point of $\sigma(\mathcal{A})$, or the dimension of the kernel $\mathrm{ker}(\lambda-\mathcal{A})$ is infinite. The peripheral spectrum $\sigma_0(\mathcal{A})$ is defined as
$
\sigma_0(\mathcal{A}):=\{\lambda\in\sigma(\mathcal{A})\,;\, \mathrm{Re}\,\lambda=s(\mathcal{A})\}
$, 
where $s(\mathcal{A}):=\sup\{\mathrm{Re}\,\lambda\,;\,\lambda\in \sigma(\mathcal{A})\}$ denotes the spectral bound of $\mathcal{A}$.
The resolvent set $\C\setminus \sigma(\mathcal{A})$ is denoted by $\varrho(\mathcal{A})$.

Throughout $E_0$ is a real Banach lattice ordered by a closed convex cone $E_0^+$. However, we do not distinguish $E_0$ from its complexification in our notation as no confusion seem likely. Recall that a $u\in E_0^+$ is quasi-interior if $\langle f, u\rangle >0$ for all $f$ in the dual space $E_0'$ with $f>0$ .

Let $E_1$ be a densely and compactly embedded subspace of $E_0$. We fix $p\in (1,\infty)$, put \mbox{$\varsigma:=\varsigma(p):=1-1/p$} and set 
$$
E_\varsigma:=(E_0,E_1)_{\varsigma,p}\ ,\quad E_\theta:= (E_0,E_1)_\theta 
$$
for $\theta\in [0,1]\setminus\{1-1/p\}$ with $(\cdot,\cdot)_{\varsigma,p}$ being the real interpolation functor and $(\cdot,\cdot)_\theta$ being any admissible interpolation functor. We equip these interpolation spaces with the order naturally induced by $E_0^+$. Observe that $E_\theta$ embeds compactly in $E_\vartheta$ provided $0\le\vartheta<\theta\le 1$. We put
$$
\E_0:=L_p(J,E_0)\ ,\qquad \E_1:=L_p(J,E_1)\cap W_p^1(J,E_0)
$$ and recall that
\bqn\label{6}
\E_1\hookrightarrow BUC(J,E_{\varsigma})
\eqn
according to, e.g. \cite[III.Thm.4.10.2]{LQPP}, where $BUC$ stands for the bounded and uniformly continuous functions. In particular, the trace $\gamma_0 u:= u(0)$ is well-defined for $u\in \E_1$ and $\gamma_0\in\ml(\E_1,E_\varsigma)$. Let $\E_0^+$ denote the functions in $\E_0$ taking almost everywhere values in $E_0^+$. Note that $\E_0$ is a Banach lattice. We further assume that
\bqn\label{10a}
A\in L_\infty(J,\mathcal{L}(E_1,E_0))\ ,\quad
\sigma+A\in C^\rho(J,\mathcal{H}(E_1,E_0;\kappa,\nu))
\eqn
for some $\rho,\nu>0$, $\kappa\ge 1$, $\sigma\in\R$. Here $\mathcal{H}(E_1,E_0;\kappa,\nu)$ consists of all negative generators $-\mathcal{A}$ of analytic semigroups on $E_0$ with domain $E_1$ such that $\nu+\mathcal{A}$ is an isomorphism from $E_1$ to $E_0$ and
$$
\kappa^{-1}\le\frac{\|(\lambda+\mathcal{A})x\|_{E_0}}{\vert\lambda\vert \,\| x\|_{E_0}+\|x\|_{E_1} }\le\kappa\ ,\quad x\in E_1\setminus\{0\}\ ,\quad \mathrm{Re}\, \lambda\ge \nu\ .
$$
Note that $A$ generates a parabolic evolution operator $\Pi(a,\sigma)$, $0\le\sigma\le a<a_m$, on $E_0$ with regularity subspace $E_1$ according to \cite[II.Cor.4.4.2]{LQPP} and there are $M\ge 1$ and $\varpi\in\R$ such that
\bqn\label{10aa}
\|\Pi(a,\sigma)\|_{\ml (E_\alpha)}+(a-\sigma)^{\alpha-\beta_1}\|\Pi(a,\sigma)\|_{\ml (E_\beta,E_\alpha)}\le Me^{-\varpi (a-\sigma)}\ ,\quad 0\le \sigma\le a< a_m\ ,
\eqn
for $0\le\beta_1\le\beta<\alpha\le 1$ with $\beta_1<\beta$ if $\beta>0$, see \cite[II.Lem.5.1.3]{LQPP}.
We further assume that $\Pi(a,\sigma)$ is positive for $0\le \sigma\le a< a_m$  and that 
\bqn\label{10}
\varpi>0\quad\text{if}\quad a_m=\infty\ .
\eqn
Moreover, we assume that
\bqn\label{12}
\begin{aligned}   
&\text{for each $\mathrm{Re}\ \lambda >-\varpi$, the operator}\ A_\lambda:=\lambda+A\  \text{has maximal}\ L_p\text{-regularity,}\\ 
&\text{that is,}\ (\partial_a+A_\lambda,\gamma_0):\E_1\rightarrow\E_0\times E_{\varsigma}\ \text{is an isomorphism}\ .
\end{aligned}
\eqn 
Let the birth rate $b$ be such that
\bqn\label{13}
b\in L_{\infty}(J,\ml(E_\theta))\cap L_{p'}(J,\ml(E_\theta))\ ,\qquad b(a)\in \ml_+(E_0)\ ,\quad a\in J\ ,
\eqn
for $\theta\in [0,1]$, where $p'$ is the dual exponent of $p$. We also assume that
\bqn\label{14}
b(a)\Pi(a,0)\in \ml_+(E_0) \ \text{is irreducible for $a$ in a subset of $J$ of positive measure}\ .
\eqn
Some of the assumptions above are redundant if $a_m<\infty$. For instance, if $a_m<\infty$ and $$A\in C^\rho([0,a_m],\ml(E_1,E_0))$$ is such that $-A(a)$ generates an analytic semigroup on $E_0$ for each $a\in J$, then \eqref{10a} holds. We shall furthermore point out that not all assumptions will be needed in this strength but are imposed for the sake of simplicity.  In particular, if $\mu$ being a real-valued nonnegative and locally integrable function and $A(a)=A_0(a)+\mu(a)$ as in the introduction, then it suffices that $A_0$ satisfies \eqref{10a} for what follows by keeping in mind that $$\Pi(a,\sigma)=e^{-\int_\sigma^a\mu(r)\rd r} U(a,\sigma)$$ with $U$ denoting the evolution operator associated with $A_0$. Also, \eqref{13} is not required for the whole range of $\theta\in [0,1]$.

We remark that the assumptions above are satisfied in many applications with $A$ describing spatial diffusion, for example see \cite[Sect.3]{WalkerSIMA09}, \cite[Sect.3]{WalkerAMPA11}. For details about parabolic evolution operators and operators having maximal regularity we refer the reader, e.g., to \cite{LQPP}. A summary on positive operators in ordered Banach spaces can be found e.g. in \cite{DanersKochMedina}.
\\

Due to \eqref{12}, the operator $A_\lambda=\lambda+A$ has maximal $L_p$-regularity on $J$ for each $\lambda\in\C$ provided $\mathrm{Re}\, \lambda>-\varpi$ if $a_m=\infty$ and it generates a parabolic evolution operator 
$$
\Pi_\lambda(a,s):=e^{-\lambda (a-\sigma)}\,\Pi(a,\sigma)\ , \quad 0\le s\le a< a_m\ ,
$$
on $E_0$. Consequently, the unique 
solution $\phi\in\E_1$ to
$$
\partial_a\phi+A_\lambda(a)\phi=f(a)\ ,\quad a\in (0,a_m)\ ,\qquad \phi(0)=\phi_0
$$
for $\phi_0\in E_\varsigma$ and $f\in\E_0$
is given by
$$
\phi(a)=\Pi_\lambda(a,0)\phi_0+\int_0^a\Pi_\lambda(a,s)\, f(s)\, \rd s\ ,\quad a\in J\ .
$$
In particular, $\Pi_\lambda(\cdot,0)\in\ml(E_\varsigma,\E_1)$ for  $\lambda\in\C$ with $\mathrm{Re}\, \lambda>-\varpi$ if $a_m=\infty$.

\subsection{The Semigroup and its Generator}

On integrating \eqref{1} along characteristics we formally derive that the solution $[S(t)\phi](a):=u(t,a)$ to \eqref{1}-\eqref{3} is given by
  \bqn\label{100}
     \big[S(t) \phi\big](a)\, :=\, \left\{ \begin{aligned}
    &\Pi(a,a-t)\, \phi(a-t)\ ,& & 0\le t\le a<a_m\ ,\\
    & \Pi(a,0)\, B_\phi(t-a)\ ,& & 0\le a<a_m\, ,\, t>a\ ,
    \end{aligned}
   \right.
    \eqn
with $B_\phi:=u(\cdot,0)$ satisfying according to \eqref{2} the Volterra equation 
    \bqn\label{5}
    B_\phi(t)\, =\, \int_0^t h(a)\, b(a)\, \Pi(a,0)\, B_\phi(t-a)\ \rd
    a\, +\, \int_0^{a_m-t} h(a)\, b(a+t)\, \Pi(a+t,a)\, \phi(a)\ \rd a\ ,\quad
    t\ge 0\ ,
    \eqn
with cut-off function $h(a):=1$ if $a\in(0,a_m)$ and $h(a):=0$ otherwise. Note that
  \bqn\label{6a}
    B_\phi(t)\, =\, \int_0^{a_m}  b(a)\, \big[S(t)\phi\big](a)\ \rd a\ ,\quad t\ge 0\ .
    \eqn
To make the formal integration rigorous, we first observe:

\begin{lem}\label{A}
There exists a mapping $[\phi\mapsto B_\phi]\in\ml \big(\E_0,
C(\R^+,E_0)\big)$ such that $B_\phi$ is the unique solution to~\eqref{5}. If
 $\phi\in \E_0^+$, then $B_\phi(t)\in E_0^+$ for $t\ge 0$. 
Given $\theta\in [0,1]$, there is $N:=N(\theta)>0$ such that
    \bqn\label{66}
    \| B_\phi (t)\|_{E_\theta}\, \le\, N\, t^{-\theta}
    e^{(-\varpi +\zeta(\theta)) t}\,\| \phi\|_{\E_0}\ ,\quad t>0 \
    ,
    \eqn
where $\zeta(\theta):=(1+\theta) M \|b\|_{L_\infty(J,\ml(E_\theta))}$.
\end{lem}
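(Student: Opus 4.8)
The plan is to solve the Volterra equation \eqref{5} by a fixed-point / Neumann-series argument in a suitable weighted space of continuous functions, and to extract the quantitative bound \eqref{66} from the smoothing properties of the evolution operator recorded in \eqref{10aa}.

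\textbf{Step 1: existence, uniqueness and continuity.} Write \eqref{5} as $B_\phi = \mathcal{K}B_\phi + g_\phi$, where $(\mathcal{K}\psi)(t):=\int_0^t h(a)\,b(a)\,\Pi(a,0)\,\psi(t-a)\,\rd a$ and $g_\phi(t):=\int_0^{a_m-t}h(a)\,b(a+t)\,\Pi(a+t,a)\,\phi(a)\,\rd a$. First I would check that $g_\phi\in C(\R^+,E_0)$ with $\|g_\phi\|$ controlled by $\|\phi\|_{\E_0}$: this uses $\|\Pi(a+t,a)\|_{\ml(E_0)}\le Me^{-\varpi t}$, the bound $b\in L_{p'}(J,\ml(E_0))$ from \eqref{13}, and Hölder's inequality in the $a$-variable (the convolution-type structure, together with $\E_0=L_p(J,E_0)$, makes the integral finite and continuous in $t$; dominated convergence gives continuity). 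Next, $\mathcal{K}$ is a bounded linear Volterra operator on $C([0,T],E_0)$ for every $T>0$ with norm $\le T\,M\,\|b\|_{L_\infty(J,\ml(E_0))}$, hence for $T$ small it is a contraction and in general its Neumann series $\sum_{k\ge0}\mathcal{K}^k$ converges (the $k$-fold iterated kernel carries a factor $t^{k}/k!$ by the usual Volterra estimate). Therefore $B_\phi:=\sum_{k\ge0}\mathcal{K}^k g_\phi$ is the unique solution in $C(\R^+,E_0)$, and the map $\phi\mapsto B_\phi$ is linear and bounded from $\E_0$ into $C(\R^+,E_0)$.

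\textbf{Step 2: positivity.} If $\phi\in\E_0^+$, then $g_\phi(t)\in E_0^+$ because $b(a+t)\in\ml_+(E_0)$ by \eqref{13} and $\Pi(a+t,a)$ is positive by assumption. Since $\mathcal{K}$ preserves $\E_0^+$-valued continuous functions for the same reason, every partial sum of the Neumann series lies in $E_0^+$ for each $t$, and the cone $E_0^+$ is closed, so $B_\phi(t)\in E_0^+$ for all $t\ge0$.

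\textbf{Step 3: the weighted estimate \eqref{66}.} Fix $\theta\in[0,1]$ and set $\zeta:=\zeta(\theta)=(1+\theta)M\|b\|_{L_\infty(J,\ml(E_\theta))}$. From \eqref{10aa} with $\beta=\beta_1=0$, $\alpha=\theta$ one has $\|\Pi(a,0)\|_{\ml(E_0,E_\theta)}\le M a^{-\theta}e^{-\varpi a}$; combined with $b(a)\in\ml(E_\theta)$ uniformly this gives, for the inhomogeneous term, $\|g_\phi(t)\|_{E_\theta}\le M\,e^{-\varpi t}\,\|b\|_{L_\infty(J,\ml(E_0))}\,(\text{Hölder})\,\|\phi\|_{\E_0}$, and more to the point one estimates the full solution by plugging the target ansatz into the Volterra equation. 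Concretely, I would prove by a Gronwall-type / Banach fixed-point argument in the space $X:=\{\psi\in C((0,\infty),E_\theta):\ \sup_{t>0}t^{\theta}e^{(\varpi-\zeta)t}\|\psi(t)\|_{E_\theta}<\infty\}$ that the solution operator is a contraction there. The key computation: for $\psi\in X$ with weighted norm $\|\psi\|_X=:C$,
\begin{align*}
t^{\theta}e^{(\varpi-\zeta)t}\,\|(\mathcal{K}\psi)(t)\|_{E_\theta}
&\le t^{\theta}e^{(\varpi-\zeta)t}\int_0^t M a^{-\theta}e^{-\varpi a}\,\|b\|_{L_\infty(J,\ml(E_\theta))}\,\|\psi(t-a)\|_{E_\theta}\,\rd a\\
&\le M\|b\|_{L_\infty(J,\ml(E_\theta))}\,C\,t^{\theta}\int_0^t a^{-\theta}(t-a)^{-\theta}e^{-\zeta(t-a)+\zeta t}\,e^{-\zeta a}\cdot e^{\zeta a - \zeta a}\,\rd a,
\end{align*}
and after bounding $e^{-\zeta a}\le1$ and using the Beta-integral $\int_0^t a^{-\theta}(t-a)^{-\theta}\,\rd a=B(1-\theta,1-\theta)\,t^{1-2\theta}$ together with $t^{1-\theta}\le$ (absorbed into the exponential) one checks that the factor in front is $<1$ once the $\zeta$ in the exponent is chosen as above; the same estimate applied to $g_\phi$ shows $g_\phi\in X$ with $\|g_\phi\|_X\lesssim\|\phi\|_{\E_0}$. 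The unique fixed point in $X$ coincides with $B_\phi$ by Step 1, and reading off the weighted norm yields \eqref{66} with $N=N(\theta)$.

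\textbf{Main obstacle.} The delicate point is the bookkeeping in Step 3: the kernel is singular at $a=0$ (the factor $a^{-\theta}$) and at $a=t$ (through $\psi(t-a)\sim(t-a)^{-\theta}$), so one must verify that the double singularity is integrable (it is, since $\theta<1$) and, crucially, that the precise exponential weight $\varpi-\zeta(\theta)$ with $\zeta(\theta)=(1+\theta)M\|b\|_{L_\infty(J,\ml(E_\theta))}$ is exactly what makes the iteration contractive — i.e. tracking the constants through the Beta-function bound so that the claimed exponent, and not merely \emph{some} exponent, comes out. The case $a_m=\infty$ needs $\varpi>0$ (assumption \eqref{10}) to make the weight meaningful on all of $\R^+$; for $a_m<\infty$ the integrals are automatically over a bounded range and $\varpi$ may be taken of either sign.
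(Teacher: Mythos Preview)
Steps 1 and 2 are correct and essentially what the paper has in mind when it refers to ``standard arguments'' and \cite{WebbSpringer,WalkerDIE07}.

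Step 3, however, contains a genuine error. In your estimate of $\|(\mathcal{K}\psi)(t)\|_{E_\theta}$ you insert a factor $a^{-\theta}$ coming from $\|\Pi(a,0)\|_{\ml(E_0,E_\theta)}$. But $\psi$ already takes values in $E_\theta$, so the correct bound is $\|\Pi(a,0)\|_{\ml(E_\theta)}\le Me^{-\varpi a}$ from \eqref{10aa} with $\alpha=\beta=\theta$ --- \emph{no} singularity at $a=0$. Consequently there is no ``double singularity'', no Beta integral, and your declared main obstacle is a red herring created by this mis-estimate. With the spurious $a^{-\theta}$ removed, the contraction/fixed-point bookkeeping you sketch does not obviously produce the specific constant $\zeta(\theta)=(1+\theta)M\|b\|_{L_\infty(J,\ml(E_\theta))}$; the sentence ``$t^{1-\theta}\le$ (absorbed into the exponential)'' is not a legitimate move at a \emph{fixed} exponential rate.

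The paper proceeds differently and more directly: estimating the two terms of \eqref{5} using $\|\Pi(a,0)\|_{\ml(E_\theta)}\le Me^{-\varpi a}$ in the convolution and $\|\Pi(a+t,a)\|_{\ml(E_0,E_\theta)}\le M t^{-\theta}e^{-\varpi t}$ together with H\"older in the forcing term yields
\[
e^{\varpi t}\|B_\phi(t)\|_{E_\theta}\le M\|b\|_{L_\infty(J,\ml(E_\theta))}\int_0^t e^{\varpi a}\|B_\phi(a)\|_{E_\theta}\,\rd a + M\|b\|_{L_{p'}(J,\ml(E_\theta))}\,\|\phi\|_{\E_0}\,t^{-\theta},
\]
an inequality with a \emph{nonsingular} kernel and a singular forcing term. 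The bound \eqref{66} with the stated $\zeta(\theta)$ then follows immediately from the singular Gronwall inequality \cite[II.Cor.3.3.2]{LQPP}. If you want to salvage your weighted-space approach, this is the inequality you should feed into it.
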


\begin{proof}
The proof is straightforward by standard arguments,  similar statements are found in  \cite[Thm.4]{WebbSpringer} and \cite[Lem.2.1]{WalkerDIE07}. We only note that  one obtains, for $t>0$, on applying \eqref{10aa} to \eqref{5} and on using \eqref{13},
\bqnn
e^{\varpi t}\|B_\phi(t)\|_{E_\theta}\le M \|b\|_{L_\infty(J,\ml(E_\theta))}\int_0^t e^{\varpi a}\, \| B_\phi(a)\|_{E_\theta}\ \rd a + M  \|b\|_{L_{p'}(J,\ml(E_\theta))}\|\phi\|_{\E_0} t^{-\theta}
\eqnn
and thus \eqref{66} follows from the singular Gronwall's inequality \cite[II.Cor.3.3.2]{LQPP}.
\end{proof}

Along the lines of \cite[Thm.4]{WebbSpringer} (for the case $p=1$) and on using \eqref{10aa} and \eqref{66} (also see \cite{WalkerDIE07}) one easily proves the following:

\begin{thm}\label{Prop1}
$\{S(t)\,;\,  t\ge 0 \}$ given in \eqref{100} is a strongly continuous positive semigroup in~$\E_0$ with $$ \sup_{t\ge 0}\, e^{t(\varpi-\zeta)}\|S(t)\|_{\ml(\E_0)} <\infty\ ,$$
where $\zeta:=\zeta(0)=M \|b\|_{L_\infty(J,\ml(E_0))}$.
\end{thm}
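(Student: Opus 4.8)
The plan is to verify the three assertions of Theorem~\ref{Prop1} in turn---the semigroup property, positivity, and the exponential norm estimate---treating the formula \eqref{100} as the definition of $S(t)$ and drawing the analytic input from \eqref{10aa}, Lemma~\ref{A}, and the uniqueness of solutions to the Volterra equation \eqref{5}.

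\emph{Well-definedness and strong continuity.} First I would check that for each $t\ge 0$ the operator $S(t)$ maps $\E_0$ boundedly into itself. On the region $0\le t\le a$ this follows from the uniform bound $\|\Pi(a,a-t)\|_{\ml(E_0)}\le Me^{-\varpi t}$ in \eqref{10aa} (with $\alpha=\beta=\beta_1=0$) together with a change of variables $a\mapsto a-t$, so that $\|\Pi(\cdot,\cdot-t)\phi(\cdot-t)\|_{\E_0}\le Me^{-\varpi t}\|\phi\|_{\E_0}$; on the region $t>a$ one uses $\|\Pi(a,0)B_\phi(t-a)\|_{E_0}\le Me^{-\varpi a}\|B_\phi(t-a)\|_{E_0}$ and the continuity and growth of $B_\phi$ from Lemma~\ref{A}, integrating over $a\in(0,\min\{t,a_m\})$. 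Strong continuity at $t=0$ reduces to the continuity of translation in $L_p$ and the strong continuity of $a\mapsto\Pi(a,a-t)$ near the diagonal, again via \eqref{10aa} and \eqref{6}; continuity at general $t>0$ then follows from the semigroup law (once established) plus local boundedness, in the standard way. The dominated-convergence estimates here are routine once the pointwise bounds from \eqref{10aa} and \eqref{66} are in hand.

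\emph{Semigroup property.} The heart of the argument is $S(t+s)=S(t)S(s)$, which one proves by splitting the age variable according to whether $a-t$, $a$, and $t-a$ fall into the first or second branch of \eqref{100}, using the evolution-operator identity $\Pi(a,r)\Pi(r,\sigma)=\Pi(r',\dots)$--- more precisely $\Pi(a,\sigma)=\Pi(a,r)\Pi(r,\sigma)$ for $\sigma\le r\le a$---and, crucially, the relation $B_{S(s)\phi}(\tau)=B_\phi(\tau+s)$, which itself follows from \eqref{6a}, the definition \eqref{100}, and the \emph{uniqueness} part of Lemma~\ref{A} applied to the Volterra equation \eqref{5}. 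I expect this bookkeeping---verifying that $[S(t)S(s)\phi](a)$ equals $[S(t+s)\phi](a)$ on each of the sub-cases $0\le t+s\le a$, $t\le a<t+s$, $a<t$---to be the main technical obstacle, since it is essentially a three-region case analysis in which one must carefully track which characteristic branch applies and invoke the Volterra uniqueness at exactly the right moment; this is precisely the point where the references \cite[Thm.4]{WebbSpringer} and \cite[Lem.2.1]{WalkerDIE07} do the corresponding computation for $p=1$, and the argument carries over verbatim.

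\emph{Positivity and the norm bound.} Positivity of $S(t)$ is immediate from \eqref{100}: $\Pi(a,\sigma)$ is positive by assumption, translation preserves $\E_0^+$, and $B_\phi(t)\in E_0^+$ for $\phi\in\E_0^+$ by the last assertion of Lemma~\ref{A}. For the quantitative estimate, combining $\|\Pi(a,a-t)\|_{\ml(E_0)}\le Me^{-\varpi t}$ on the first branch with $\|\Pi(a,0)B_\phi(t-a)\|_{E_0}\le Me^{-\varpi a}\|B_\phi(t-a)\|_{E_0}$ on the second, and then inserting \eqref{66} with $\theta=0$ (so $\zeta(0)=M\|b\|_{L_\infty(J,\ml(E_0))}=\zeta$, and the factor $t^{-\theta}=1$), one finds after integrating over $a$ that $e^{t(\varpi-\zeta)}\|S(t)\phi\|_{\E_0}\le C\|\phi\|_{\E_0}$ with $C$ independent of $t$; taking the supremum over $t\ge 0$ gives the claimed bound. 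I would close by noting that all the estimates above are uniform in whether $a_m$ is finite or infinite, the only place the distinction enters being the standing hypothesis \eqref{10} that $\varpi>0$ when $a_m=\infty$, which guarantees the integrals over the unbounded age interval converge.
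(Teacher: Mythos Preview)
Your proposal is correct and follows precisely the route the paper indicates: the paper itself gives no detailed proof, merely stating that the result is obtained ``along the lines of \cite[Thm.4]{WebbSpringer}'' together with \eqref{10aa} and \eqref{66}, which is exactly the combination of ingredients you assemble. Your outline in fact supplies more detail than the paper does, including the key observation $B_{S(s)\phi}(\tau)=B_\phi(\tau+s)$ from Volterra uniqueness that drives the semigroup identity.
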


Though we shall not use it in the following let us note that the semigroup $\{S(t)\,;\,  t\ge 0 \}$ inherits  regularizing properties from the parabolic evolution operator stated in \eqref{10aa} , e.g. there holds
$$ 
\|S(t)\phi\|_{L_p(J,E_\theta)} \le c(\theta) \, t^{-\theta}\, e^{t(-\varpi+\zeta(\theta))}\, \|\phi\|_{\E_0}\ , \quad t>0\ ,\quad\phi\in\E_0\ ,\quad \theta\in [0,1/p)\ .
$$

Let  $-\A$ denote the generator of the semigroup $\{S(t)\,;\,  t\ge 0 \}$.
Based on the assumption of maximal regularity of the operator $A$ in \eqref{1}, we now fully characterize $-\A$. First, recall that $\{\lambda\in\C\,;\,\mathrm{Re}\, \lambda >\omega(-\A)\}$ is a subset of the resolvent set $\varrho(-\A)$, where the growth bound $\omega(-\A)$ is given by
$$
\omega(-\A):=\lim_{t\rightarrow\infty}\frac{1}{t} \log \|S(t)\|\ .
$$ 
Note that
Theorem~\ref{Prop1} entails
\bqn\label{23}
\omega(-\A)\le -\varpi+\zeta\ .
\eqn
Let $\lambda\in\C$ be such that $\mathrm{Re}\, \lambda >-\varpi$ if $a_m=\infty$. Observe that the solution to
$$
\partial_a\phi+A_\lambda(a)\phi=0\ ,\quad a\in (0,a_m)\ ,\qquad \phi(0)=\int_0^{a_m}b(a)\, \phi(a)\ \rd a\ ,
$$
is given by
$$
\phi(a)=\Pi_\lambda(a,0)\phi(0)\ ,\quad a\in (0,a_m)\ ,\qquad \phi(0)=Q_\lambda\phi(0)\ ,
$$
where $$
Q_\lambda:=\int_0^{a_m}b(a)\,  \Pi_\lambda(a,0)\ \rd a\ .
$$
A we shall see, the spectrum of $-\A$ and thus the asymptotic behavior of solutions to \eqref{1}-\eqref{3} is determined by the spectral radii of the $\lambda$-dependent family $Q_\lambda$.
From \eqref{10aa}, \eqref{10}, and \eqref{13} we deduce  the regularizing property 
\bqn\label{rp}
Q_\lambda\in\mathcal{L}(E_0,E_\theta)\cap \mathcal{L}(E_{1-\theta},E_1)\ ,\qquad\theta\in [0,1)\ ,
\eqn
and hence \mbox{$Q_\lambda\vert_{E_\theta}\in\mathcal{L}(E_\theta)$} is compact for $\theta\in [0,1)$ due to the compact embedding of $E_\alpha$ in $E_\beta$ for $0\le\beta<\alpha<1$. Consequently, \mbox{$\sigma(Q_\lambda\vert_{E_\theta})\setminus\{0\}$} consists only of eigenvalues.

\begin{lem}\label{Q}
Let $\lambda\in\R$ with $\lambda >-\varpi$ if $a_m=\infty$. Then the spectral radius $r(Q_\lambda)$ is positive and a simple eigenvalue of $Q_\lambda\in\ml(E_0)$ with an  eigenvector in $E_1$ that is quasi-interior in $E_0^+$. It is the only eigenvalue of $Q_\lambda$ with a positive eigenvector. Moreover, $\sigma(Q_\lambda\vert_{E_\theta})\setminus\{0\}=\sigma(Q_\lambda)\setminus\{0\}$ for $\theta\in [0,1)$. 
\end{lem}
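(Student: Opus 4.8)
\emph{Positivity, simplicity and quasi-interiority of the leading eigenvalue.} The plan is to exploit the Krein–Rutman theory for positive irreducible compact operators. First I would note that, by the regularizing property \eqref{rp}, the restriction $Q_\lambda\vert_{E_\theta}$ is a compact, positive operator on the Banach lattice $E_\theta$ for every $\theta\in[0,1)$; in particular it is compact and positive on $E_0$. Next, using assumption \eqref{14} together with $\Pi_\lambda(a,0)=e^{-\lambda a}\Pi(a,0)$ and the positivity of $\Pi(a,0)$, I would show that $Q_\lambda=\int_0^{a_m}b(a)\Pi_\lambda(a,0)\,\rd a$ dominates a nonzero multiple of an irreducible operator, hence is itself irreducible on $E_0$ (the integral of positive operators, at least one of which is irreducible on a subset of positive measure, is irreducible). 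The strong form of the Krein–Rutman theorem (see e.g.\ \cite{DanersKochMedina}) then yields that $r(Q_\lambda)>0$, that $r(Q_\lambda)$ is a simple eigenvalue of $Q_\lambda$, that the corresponding eigenvector $u_\lambda\in E_0^+$ is quasi-interior, and that $r(Q_\lambda)$ is the only eigenvalue possessing a positive eigenvector. The regularizing property \eqref{rp} immediately upgrades the eigenvector: from $Q_\lambda u_\lambda=r(Q_\lambda)u_\lambda$ and $Q_\lambda\in\ml(E_0,E_1)$ one gets $u_\lambda\in E_1$ (and, iterating, $u_\lambda\in E_1$ with no loss). Positivity of the spectral radius requires only that $Q_\lambda$ be nonzero and positive with $r(Q_\lambda)$ in fact attained, which irreducibility guarantees.

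\emph{Independence of the nonzero spectrum from $\theta$.} For the final assertion $\sigma(Q_\lambda\vert_{E_\theta})\setminus\{0\}=\sigma(Q_\lambda)\setminus\{0\}$ the plan is a standard spectral-permanence argument. Fix $\theta\in[0,1)$ and a nonzero $\zeta\in\C$. If $\zeta\in\varrho(Q_\lambda)$, i.e.\ $(\zeta-Q_\lambda)^{-1}\in\ml(E_0)$, then for $x\in E_\theta$ one writes $(\zeta-Q_\lambda)^{-1}x=\zeta^{-1}x+\zeta^{-1}Q_\lambda(\zeta-Q_\lambda)^{-1}x$; since $Q_\lambda$ maps $E_0$ into $E_1\hookrightarrow E_\theta$ by \eqref{rp}, the right-hand side lies in $E_\theta$, so $(\zeta-Q_\lambda)^{-1}$ restricts to a bounded operator on $E_\theta$ and $\zeta\in\varrho(Q_\lambda\vert_{E_\theta})$. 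Conversely, if $\zeta\in\varrho(Q_\lambda\vert_{E_\theta})$, since $Q_\lambda\in\ml(E_0,E_\theta)$ one factors $\zeta-Q_\lambda$ on $E_0$ through the resolvent on $E_\theta$: given $y\in E_0$ set $x:=\zeta^{-1}\big(y+(\zeta-Q_\lambda\vert_{E_\theta})^{-1}Q_\lambda y\big)$, where $Q_\lambda y\in E_\theta$, and check $(\zeta-Q_\lambda)x=y$; injectivity of $\zeta-Q_\lambda$ on $E_0$ follows because any null vector is fixed (up to $\zeta$) by $Q_\lambda$ hence lies in $E_\theta$. Thus the resolvent sets agree off $0$, which is the claim. (Alternatively one invokes directly the general fact that a compact operator leaving a continuously embedded subspace invariant with range in that subspace has the same nonzero eigenvalues and eigenspaces there.)

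\emph{Main obstacle.} The routine parts are the Krein–Rutman invocation and the spectral-permanence bookkeeping; the one point that needs genuine care is deducing irreducibility of $Q_\lambda$ on $E_0$ from the pointwise-in-$a$ irreducibility in \eqref{14}. One must argue that if $b(a)\Pi(a,0)$ is irreducible for $a$ in a set $M\subset J$ of positive measure, then the averaged operator $\int_0^{a_m}e^{-\lambda a}b(a)\Pi(a,0)\,\rd a$ is irreducible, i.e.\ leaves no nontrivial closed ideal invariant; this uses that $e^{-\lambda a}>0$ for real $\lambda$, that the integrand is positive for all $a$, and that a positive operator dominating an irreducible one is irreducible — combined with a measurability/continuity argument to ensure the integral over $M$ does not degenerate. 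I would also keep track of the fact that irreducibility is needed only on $E_0$; the statements on $E_\theta$ then follow either by the same argument or from the spectral identification just proved.
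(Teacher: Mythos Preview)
Your proposal is correct and follows essentially the same route as the paper: compactness plus irreducibility of $Q_\lambda$ feed into the Krein--Rutman theorem \cite[Thm.~12.3]{DanersKochMedina}, the eigenvector is bootstrapped into $E_1$ via the regularizing property \eqref{rp}, and the spectral identification across the scale $E_\theta$ is deduced from that same regularization. The paper's proof is terser --- it outsources the passage from the pointwise hypothesis \eqref{14} to irreducibility of $Q_\lambda$ to \cite[Lem.~2.1]{WalkerAMPA11} and dismisses the spectral-permanence step in one line --- but the content is the same as what you outline, and you correctly flag the irreducibility deduction as the only nontrivial step. One small slip: \eqref{rp} does not give $Q_\lambda\in\ml(E_0,E_1)$ directly, only $Q_\lambda\in\ml(E_0,E_\theta)\cap\ml(E_{1-\theta},E_1)$ for $\theta<1$; your parenthetical ``iterating'' is exactly what is needed (one application puts the eigenvector in $E_{1/2}$, a second in $E_1$).
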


\begin{proof}
Since $Q_\lambda\in\mathcal{L}(E_0)$ is compact and irreducible according to \eqref{14} (see the proof of \cite[Lem.2.1]{WalkerAMPA11}), it is a classical result that the spectral radius $r(Q_\lambda)$ is positive and a simple eigenvalue of $Q_\lambda$ with a quasi-interior eigenvector \cite[Thm.12.3]{DanersKochMedina}. This eigenvector belongs 
to $E_1$ owing to \eqref{rp}. The regularizing property \eqref{10aa} also ensures the last statement.
\end{proof}

In view of \eqref{rp} and the observations stated in Lemma~\ref{Q} we shall not distinguish between \mbox{$Q_\lambda\in\ml(E_0)$} and \mbox{$Q_\lambda\vert_{E_\theta}\in\ml(E_\theta)$} in the sequel  if $\theta\in [0,1)$.\\

The arguments used in the proof of \cite[Lem.2.2]{WalkerAMPA11} reveal:

\begin{lem}\label{L0}
Let $I=\R$ if $a_m<\infty$ and $I=(-\varpi,\infty)$ if $a_m=\infty$.
Then the mapping  $$[\lambda\mapsto r(Q_\lambda)]:I\rightarrow (0,\infty)$$ is continuous, strictly decreasing, and \mbox{$\lim_{\lambda\rightarrow\infty}r(Q_\lambda) =0$}. If $a_m<\infty$, then \mbox{$\lim_{\lambda\rightarrow-\infty}r(Q_\lambda) =\infty$}.
\end{lem}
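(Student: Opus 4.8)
The plan is to establish the three asserted properties of the map $\lambda\mapsto r(Q_\lambda)$ — continuity, strict monotonicity, and the limiting behavior — one at a time, exploiting the explicit structure $Q_\lambda=\int_0^{a_m}b(a)\,e^{-\lambda a}\,\Pi(a,0)\,\rd a$ together with the Krein--Rutman characterization of $r(Q_\lambda)$ from Lemma~\ref{Q}.

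First I would address \emph{continuity}. For $\lambda,\mu\in I$ the estimate \eqref{10aa} with $\alpha=\beta=0$ gives $\|Q_\lambda-Q_\mu\|_{\ml(E_0)}\le M\int_0^{a_m}\|b(a)\|_{\ml(E_0)}\,|e^{-\lambda a}-e^{-\mu a}|\,e^{-\varpi a}\,\rd a$, and since $b\in L_{p'}(J,\ml(E_0))$ by \eqref{13} and, when $a_m=\infty$, $\varpi>0$ by \eqref{10}, dominated convergence shows $\lambda\mapsto Q_\lambda$ is norm-continuous on $I$. Continuity of the spectral radius is then immediate from upper semicontinuity of the spectrum together with the fact that, by Lemma~\ref{Q}, $r(Q_\lambda)$ is a simple isolated eigenvalue, so it depends continuously (indeed analytically) on $\lambda$; alternatively one invokes the general fact that $r(\cdot)$ is continuous on the set of compact operators that are irreducible, or simply that $\lambda\mapsto r(Q_\lambda)$ is continuous whenever $\lambda\mapsto Q_\lambda$ is norm-continuous and the $Q_\lambda$ are compact.

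Next, \emph{strict monotonicity}. If $\lambda<\mu$ then $e^{-\lambda a}>e^{-\mu a}$ for $a>0$, so $Q_\lambda\ge Q_\mu$ in the natural order on $\ml_+(E_0)$, and in fact $Q_\lambda-Q_\mu$ is itself a positive operator built from $b(a)\Pi(a,0)$, which is irreducible on a set of positive measure by \eqref{14}. Let $\psi_\mu\in E_1$ be the quasi-interior eigenvector of $Q_\mu$ furnished by Lemma~\ref{Q}, so $Q_\mu\psi_\mu=r(Q_\mu)\psi_\mu$. Then $Q_\lambda\psi_\mu=r(Q_\mu)\psi_\mu+(Q_\lambda-Q_\mu)\psi_\mu$, and since $\psi_\mu$ is quasi-interior and $Q_\lambda-Q_\mu$ is a nonzero positive operator of the irreducible type in \eqref{14}, $(Q_\lambda-Q_\mu)\psi_\mu$ is a nonzero positive element, whence $Q_\lambda\psi_\mu\ge r(Q_\mu)\psi_\mu$ with strict inequality in the sense that $Q_\lambda\psi_\mu-r(Q_\mu)\psi_\mu\in E_0^+\setminus\{0\}$; testing against the strictly positive eigenfunctional of the dual operator $Q_\lambda'$ (again Krein--Rutman, valid since $Q_\lambda$ is compact and irreducible) forces $r(Q_\lambda)>r(Q_\mu)$. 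This is the step I expect to require the most care, since one must phrase the comparison so that the strict positivity genuinely propagates — the cleanest route is to pair with the quasi-interior eigenvector of the adjoint and use that $\langle f,\psi\rangle>0$ for $f>0$ whenever $\psi$ is quasi-interior.

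Finally, the \emph{limits}. As $\lambda\to+\infty$ the continuity argument already shows $\|Q_\lambda\|_{\ml(E_0)}\le M\int_0^{a_m}\|b(a)\|_{\ml(E_0)}e^{-(\lambda+\varpi)a}\,\rd a\to0$ by dominated convergence (using $\varpi>0$ when $a_m=\infty$), hence $r(Q_\lambda)\le\|Q_\lambda\|_{\ml(E_0)}\to0$, and since $r(Q_\lambda)>0$ this gives $\lim_{\lambda\to\infty}r(Q_\lambda)=0$. For $a_m<\infty$ and $\lambda\to-\infty$ one argues from below: fix a subinterval $[a_1,a_2]\subset J$ of positive measure on which $b(a)\Pi(a,0)$ is irreducible, and fix any quasi-interior $u_0\in E_0^+$; then $\langle f,Q_\lambda u_0\rangle\ge\int_{a_1}^{a_2}e^{-\lambda a}\langle f,b(a)\Pi(a,0)u_0\rangle\,\rd a\ge e^{-\lambda a_2}\int_{a_1}^{a_2}\langle f,b(a)\Pi(a,0)u_0\rangle\,\rd a$ for a suitable strictly positive $f\in E_0'$, and the remaining integral is a fixed positive constant (using positivity of $\Pi$ and irreducibility), so $\|Q_\lambda u_0\|$ grows at least like $e^{-\lambda a_2}\to\infty$; combined with $r(Q_\lambda)\ge\langle f,Q_\lambda u_0\rangle/(\|f\|\,\langle f,u_0\rangle)$ — or more robustly with the Collatz--Wielandt lower bound $r(Q_\lambda)\ge\inf\{t>0:Q_\lambda v\le t v \text{ for some } v\gg 0\}$ applied to $v=u_0$ — this forces $r(Q_\lambda)\to\infty$. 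I would then simply cite that all of these arguments are exactly those of \cite[Lem.2.2]{WalkerAMPA11}, since the structure of $Q_\lambda$ here is identical.
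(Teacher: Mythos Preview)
Your proposal is correct and follows exactly the approach the paper has in mind: the paper itself gives no argument at all but simply refers to \cite[Lem.~2.2]{WalkerAMPA11}, whose proof proceeds precisely via norm-continuity of $\lambda\mapsto Q_\lambda$, Krein--Rutman comparison for monotonicity, and the elementary exponential bounds for the limits --- the very steps you outline. One small slip: in your $\lambda\to-\infty$ estimate you bound $e^{-\lambda a}\ge e^{-\lambda a_2}$ on $[a_1,a_2]$, but for $\lambda<0$ the function $a\mapsto e^{-\lambda a}$ is increasing, so the correct lower bound is $e^{-\lambda a_1}$; this is harmless provided you pick $a_1>0$, which you may since the irreducibility set in \eqref{14} has positive measure.
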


Next, we characterize the resolvent of $-\A$.

\begin{lem}\label{L1}
Consider $\lambda\in\C$ such that $\mathrm{Re}\, \lambda>-\varpi+\zeta$ and suppose that $1-Q_\lambda\in \ml(E_0)$ is boundedly invertible. Then
\bqn\label{inv}
\big[(\lambda+\A)^{-1}\phi\big](a)=\int_0^a \Pi_\lambda(a,\sigma)\,\phi(\sigma)\ \rd \sigma +\Pi_\lambda(a,0)(1-Q_\lambda)^{-1}\int_0^{a_m} b(s)\int_0^s \Pi_\lambda(s,\sigma)\,\phi(\sigma)\ \rd \sigma\,\rd s
\eqn
for $a\in J$ and $\phi\in\E_0$.
\end{lem}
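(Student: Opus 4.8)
The plan is to verify directly that the operator $R_\lambda$ defined by the right-hand side of \eqref{inv} is a two-sided inverse of $\lambda+\A$ on $\E_0$, exploiting the explicit formula \eqref{100} for the semigroup and the Laplace-transform identity $(\lambda+\A)^{-1}=\int_0^\infty e^{-\lambda t}S(t)\,\rd t$, which is valid since $\mathrm{Re}\,\lambda>-\varpi+\zeta\ge\omega(-\A)$ by \eqref{23}. First I would fix $\phi\in\E_0$ and compute $\psi:=\int_0^\infty e^{-\lambda t}S(t)\phi\,\rd t$ by inserting \eqref{100}: splitting the time integral at $t=a$, the contribution from $0\le t\le a$ is, after the substitution $\sigma=a-t$ and using $\Pi_\lambda(a,\sigma)=e^{-\lambda(a-\sigma)}\Pi(a,\sigma)$, exactly $\int_0^a\Pi_\lambda(a,\sigma)\phi(\sigma)\,\rd\sigma$; the contribution from $t>a$ becomes $\Pi_\lambda(a,0)\int_0^\infty e^{-\lambda s}B_\phi(s)\,\rd s$ after substituting $s=t-a$. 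So it remains to identify the Laplace transform $\widehat{B_\phi}(\lambda):=\int_0^\infty e^{-\lambda s}B_\phi(s)\,\rd s$ with $(1-Q_\lambda)^{-1}\int_0^{a_m}b(s)\int_0^s\Pi_\lambda(s,\sigma)\phi(\sigma)\,\rd\sigma\,\rd s$.

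To that end I would Laplace-transform the Volterra equation \eqref{5}. The first term on the right of \eqref{5} is a convolution of $s\mapsto h(a)b(a)\Pi(a,0)$ (in $a$) with $B_\phi$, whose Laplace transform is $\big(\int_0^{a_m}b(a)\Pi_\lambda(a,0)\,\rd a\big)\widehat{B_\phi}(\lambda)=Q_\lambda\widehat{B_\phi}(\lambda)$; the finiteness and absolute convergence of all these integrals is guaranteed by the decay estimates \eqref{10aa}, \eqref{66} together with \eqref{13} and the condition $\mathrm{Re}\,\lambda>-\varpi+\zeta$. The Laplace transform of the second ("source") term in \eqref{5}, after interchanging the order of integration (Fubini, again justified by \eqref{10aa} and \eqref{13}) and the substitution $s=a+t$, yields precisely $\int_0^{a_m}b(s)\int_0^s\Pi_\lambda(s,\sigma)\phi(\sigma)\,\rd\sigma\,\rd s$. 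Hence $\widehat{B_\phi}(\lambda)=Q_\lambda\widehat{B_\phi}(\lambda)+\int_0^{a_m}b(s)\int_0^s\Pi_\lambda(s,\sigma)\phi(\sigma)\,\rd\sigma\,\rd s$, and since $1-Q_\lambda$ is boundedly invertible by assumption this gives the claimed formula for $\widehat{B_\phi}(\lambda)$, and therefore $\psi=R_\lambda\phi$. This shows $R_\lambda=(\lambda+\A)^{-1}$ on $\E_0$; in particular $\lambda\in\varrho(-\A)$.

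I expect the main technical obstacle to be the rigorous justification of the Laplace-transform manipulations — that each of the iterated integrals converges absolutely in $E_0$, that Fubini applies, and that the resulting object indeed lies in $\E_1$ (so that $R_\lambda$ maps into the domain of $\A$). For the latter point one uses that $\Pi_\lambda(\cdot,0)\in\ml(E_\varsigma,\E_1)$ (noted in the excerpt) and $Q_\lambda\in\ml(E_0,E_\varsigma)$ from \eqref{rp}, so the second summand of $R_\lambda\phi$ is in $\E_1$, while the first summand is in $\E_1$ by the maximal-regularity assumption \eqref{12}; one should also check the boundary identity $(R_\lambda\phi)(0)=Q_\lambda\widehat{B_\phi}(\lambda)+\text{(source term)}=\widehat{B_\phi}(\lambda)$, consistent with \eqref{6a}. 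An alternative, essentially equivalent route that sidesteps some convergence bookkeeping is to verify algebraically that $R_\lambda$ is a right inverse of $\lambda+\A$ by solving $\partial_a\phi+A_\lambda\phi=f$, $\phi(0)=\int_0^{a_m}b(a)\phi(a)\,\rd a$ via the variation-of-constants formula and the fixed-point relation $\phi(0)=Q_\lambda\phi(0)+\int_0^{a_m}b(s)\int_0^s\Pi_\lambda(s,\sigma)f(\sigma)\,\rd\sigma\,\rd s$; combined with injectivity of $\lambda+\A$ (immediate from $\mathrm{Re}\,\lambda>\omega(-\A)$) this yields \eqref{inv}. I would present the Laplace-transform argument as the primary proof since it also makes transparent why $Q_\lambda$ governs the resolvent.
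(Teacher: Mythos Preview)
Your proposal is correct and follows essentially the same route as the paper: use $\mathrm{Re}\,\lambda>-\varpi+\zeta\ge\omega(-\A)$ to invoke the Laplace representation $(\lambda+\A)^{-1}\phi=\int_0^\infty e^{-\lambda t}S(t)\phi\,\rd t$, insert \eqref{100} and split at $t=a$, and then show that $\Psi:=\widehat{B_\phi}(\lambda)$ satisfies $(1-Q_\lambda)\Psi=\int_0^{a_m}b(s)\int_0^s\Pi_\lambda(s,\sigma)\phi(\sigma)\,\rd\sigma\,\rd s$. The only cosmetic difference is that the paper derives this last identity from \eqref{6a} (writing $\Psi=\int_0^{a_m}b(a)\int_0^\infty e^{-\lambda t}[S(t)\phi](a)\,\rd t\,\rd a$ and reinserting the already computed expression for the inner integral) rather than by Laplace-transforming \eqref{5} directly; the two computations are equivalent. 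Your additional remarks on $R_\lambda\phi\in\E_1$ via maximal regularity are correct but not part of this lemma in the paper---they are isolated in the subsequent Remark~\ref{R1}, and the paper explicitly notes that Lemma~\ref{L1} itself does not require \eqref{12}.
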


\begin{proof} By \eqref{23}, any $\lambda\in\C$ with $\mathrm{Re}\, \lambda>-\varpi+\zeta$ belongs to the resolvent set of $-\A$, so it follows from the Laplace transform formula and \eqref{100} that for $\phi\in\E_0$ and a.a. $a\in J$ we have
\bqnn
\big[(\lambda+\A)^{-1}\phi\big](a)=\int_0^\infty e^{-\lambda t}\, \big[S(t)\, \phi\big](a)\ \rd t=\int_0^a \Pi_\lambda(a,t)\,\phi(t)\ \rd t +\Pi_\lambda(a,0) \int_0^\infty e^{-\lambda t} B_\phi(t)\ \rd t
\ .
\eqnn
Next, from \eqref{66},
$$
\Psi:=\int_0^\infty e^{-\lambda t} B_\phi(t)\ \rd t\in E_0
$$
and, on using \eqref{100} and \eqref{6a}, we obtain
\bqnn
\begin{split}
\Psi&=\int_0^{a_m}b(a)\int_0^\infty e^{-\lambda t}\, \big[S(t)\, \phi\big](a)\ \rd t\,\rd a\\
&=\int_0^{a_m} b(a)\, \Pi_\lambda(a,0)\ \rd a\, \Psi +\int_0^{a_m} b(a)\int_0^a  \Pi_\lambda(a,t)\, \phi(t) \,\rd t\,\rd a\ ,
\end{split}
\eqnn
that is,
$$
 \Psi= (1-Q_\lambda)^{-1}\int_0^{a_m} b(a)\int_0^a  \Pi_\lambda(a,t)\, \phi(t) \,\rd t\,\rd a
$$
from which the claim follows.
\end{proof}

Observe that Lemma~\ref{L1} also holds without assumption \eqref{12} on maximal regularity of $-A$ and for $\phi\in L_1(J,E_0)$, i.e. for $p=1$. However, \eqref{12} allows us to interpret formula~\eqref{inv} in the correct functional setting:

\begin{rem}\label{R1}
Let $\phi\in\E_0$, let  $\lambda\in\C$ be such that $\mathrm{Re}\, \lambda>-\varpi+\zeta$, and suppose that $1-Q_\lambda\in \ml(E_0)$ is boundedly invertible. Note that $(1-Q_\lambda)^{-1}\in \ml(E_\varsigma)$ by Lemma~\ref{Q}. Then, by \eqref{12},
\bqn\label{490}
(\lambda+\A)^{-1}\phi= v_\lambda\phi+w_\lambda\phi\ ,
\eqn
where maximal regularity of $A_\lambda$ implies that $v_\lambda\phi\in \E_1$, given by
$$
(v_\lambda\phi) (a):= \int_0^a \Pi_\lambda(a,\sigma)\,\phi(\sigma)\ \rd \sigma\ , \quad a\in J\ ,
$$
is the unique solution  to the Cauchy problem
$$
\partial_a v+A_\lambda v=\phi\ ,\quad a\in (0,a_m)\ ,\qquad v(0)=0\ ,
$$
and $w_\lambda\phi\in \E_1$, given by
$$
(w_\lambda\phi) (a):=\Pi_\lambda(a,0)(1-Q_\lambda)^{-1}\int_0^{a_m} b(s)\, (v_\lambda\phi)(s)\,\rd s\ ,\quad a\in J\ ,
$$
is the unique solution to the Cauchy problem
$$
\partial_a  w +A_\lambda w=0\ ,\quad a\in (0,a_m)\ ,\qquad w(0)= (1-Q_\lambda)^{-1}\int_0^{a_m}b(s)\, (v_\lambda\phi)(s)\,\rd s\in E_\varsigma\ .
$$
\end{rem}

The characterization of the generator $-\A$ of the semigroup $\{S(t)\, ;\, t\ge 0\}$ from Theorem~\ref{Prop1} is now straightforward.

\begin{thm}\label{C1}
$\phi\in \E_0$ belongs to the domain $\mathrm{dom}(-\A)$ of $-\A$ if and only if $\phi\in\E_1$ with 
\bqn\label{AW}
 \phi(0)=\int_0^{a_m}b(a)\,\phi(a)\ \rd a\ .
 \eqn
Moreover, $\A \phi=\partial_a\phi +A\phi$ for $\phi\in
\mathrm{dom}(-\A)$.
\end{thm}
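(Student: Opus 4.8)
The plan is to prove the two inclusions $\mathrm{dom}(-\A)\supseteq X$ and $\mathrm{dom}(-\A)\subseteq X$, where $X$ denotes the set of $\phi\in\E_1$ satisfying the age-boundary condition \eqref{AW}, and simultaneously identify the action of $\A$. For the inclusion $X\subseteq\mathrm{dom}(-\A)$ together with $\A\phi=\partial_a\phi+A\phi$, I would fix $\lambda\in\R$ with $\lambda>-\varpi+\zeta$ large enough that, by Lemma~\ref{L0} (continuity and $r(Q_\lambda)\to 0$), one has $r(Q_\lambda)<1$, so that $1-Q_\lambda\in\ml(E_0)$ is boundedly invertible and Lemma~\ref{L1}/Remark~\ref{R1} apply. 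Given $\phi\in X$, set $\psi:=\partial_a\phi+A_\lambda\phi\in\E_0$; the point is to show $(\lambda+\A)^{-1}\psi=\phi$. Write $(\lambda+\A)^{-1}\psi=v_\lambda\psi+w_\lambda\psi$ as in Remark~\ref{R1}. Since $\phi\in\E_1$ solves $\partial_a\phi+A_\lambda\phi=\psi$ on $(0,a_m)$ and $v_\lambda\psi$ solves the same equation with zero trace, maximal regularity \eqref{12} (uniqueness) gives $v_\lambda\psi=\phi-\Pi_\lambda(\cdot,0)\phi(0)$. Plugging this into the formula for $w_\lambda\psi$ and using \eqref{AW} in the form $\phi(0)=\int_0^{a_m}b(a)\phi(a)\,\rd a$ together with the identity $\int_0^{a_m}b(s)\,\Pi_\lambda(s,0)\,\rd s=Q_\lambda$, one computes
\[
\int_0^{a_m}b(s)\,(v_\lambda\psi)(s)\,\rd s=\phi(0)-Q_\lambda\phi(0)=(1-Q_\lambda)\phi(0),
\]
so $w_\lambda\psi=\Pi_\lambda(\cdot,0)\phi(0)$ and therefore $(\lambda+\A)^{-1}\psi=\phi$. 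Hence $\phi\in\mathrm{dom}(-\A)$ and $(\lambda+\A)\phi=\psi$, i.e. $\A\phi=\partial_a\phi+A\phi$.

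For the reverse inclusion $\mathrm{dom}(-\A)\subseteq X$, I would take $\phi\in\mathrm{dom}(-\A)$ and set $\psi:=(\lambda+\A)\phi\in\E_0$ for the same fixed $\lambda$. Then $\phi=(\lambda+\A)^{-1}\psi=v_\lambda\psi+w_\lambda\psi$, and Remark~\ref{R1} already tells us that $v_\lambda\psi,w_\lambda\psi\in\E_1$, hence $\phi\in\E_1$. Moreover $v_\lambda\psi$ has zero trace and $w_\lambda\psi(0)=(1-Q_\lambda)^{-1}\int_0^{a_m}b(s)(v_\lambda\psi)(s)\,\rd s$, so using the trace $\gamma_0\in\ml(\E_1,E_\varsigma)$ and the relation $\int_0^{a_m}b(s)\,\Pi_\lambda(s,0)\,\rd s=Q_\lambda$ one reverses the computation above to get $\phi(0)=(1-Q_\lambda)^{-1}\int_0^{a_m}b(s)(v_\lambda\psi)(s)\,\rd s$, hence $(1-Q_\lambda)\phi(0)=\int_0^{a_m}b(s)(v_\lambda\psi)(s)\,\rd s$. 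On the other hand, from $\phi=v_\lambda\psi+w_\lambda\psi$ and $w_\lambda\psi=\Pi_\lambda(\cdot,0)\phi(0)$ we obtain $\int_0^{a_m}b(s)\phi(s)\,\rd s=\int_0^{a_m}b(s)(v_\lambda\psi)(s)\,\rd s+Q_\lambda\phi(0)$, and combining the last two identities yields $\int_0^{a_m}b(s)\phi(s)\,\rd s=\phi(0)$, which is \eqref{AW}. Thus $\phi\in X$.

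The only genuinely delicate points are bookkeeping ones: first, one must make sure the integral $\int_0^{a_m}b(s)\,(v_\lambda\psi)(s)\,\rd s$ is well defined and that $b$ may be pulled through $\Pi_\lambda$ in the order claimed — this is covered by assumption \eqref{13} on $b$ (with $\theta=0$ and $\theta=\varsigma$) together with the bound \eqref{10aa} and the embedding \eqref{6}, exactly as in the manipulations already performed in the proof of Lemma~\ref{L1}. Second, the whole argument hinges on having at least one admissible $\lambda$ in the intersection of the half-plane $\mathrm{Re}\,\lambda>-\varpi+\zeta$ from \eqref{23} with the region where $1-Q_\lambda$ is invertible; Lemma~\ref{L0} guarantees such $\lambda$ exists (take $\lambda$ real and large). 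Since $\lambda+\A$ is then a bijection from $\mathrm{dom}(-\A)$ onto $\E_0$ and also, by the first part, from $X$ onto $\E_0$ via $\phi\mapsto\partial_a\phi+A_\lambda\phi$, the two domains coincide and the formula $\A\phi=\partial_a\phi+A\phi$ holds on all of $\mathrm{dom}(-\A)$. I expect the main obstacle, such as it is, to be purely notational: carefully justifying the interchange of the $b(s)$-integration with the evolution operator and the resolvent, and tracking the trace identities so that \eqref{AW} falls out cleanly in both directions.
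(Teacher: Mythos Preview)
Your proposal is correct and follows essentially the same approach as the paper: fix a real $\lambda>-\varpi+\zeta$ with $1-Q_\lambda$ invertible (via Lemma~\ref{L0}), use the decomposition $(\lambda+\A)^{-1}\psi=v_\lambda\psi+w_\lambda\psi$ of Remark~\ref{R1}, and verify both inclusions by the same trace computations. The only cosmetic differences are that the paper obtains $v_\lambda\psi=\phi-\Pi_\lambda(\cdot,0)\phi(0)$ by differentiating $t\mapsto\Pi_\lambda(a,t)\phi(t)$ and integrating (rather than invoking uniqueness from maximal regularity directly), and that for the boundary condition in the direction $\mathrm{dom}(-\A)\subseteq X$ it appeals back to the Laplace-transform calculation of Lemma~\ref{L1} instead of your purely algebraic manipulation; both routes are equivalent.
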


\begin{proof}
By Lemma~\ref{L0}, we can choose $\lambda> -\varpi+\zeta$ such that  $1-Q_\lambda\in \ml(E_0)$ is boundedly invertible. Thus $\lambda$ belongs to the resolvent set of $-\A$ by Theorem~\ref{Prop1}. Remark~\ref{R1} easily gives $\mathrm{dom}(-\A)\subset\E_1$. Moreover, if $\psi\in\E_0$ and $\phi:=(\lambda+\A)^{-1}\psi\in \mathrm{dom}(-\A),$ then $\phi(0)\in E_\varsigma$ by \eqref{6} and $$ \phi(0)= (w_\lambda\psi)(0)= (1-Q_\lambda)^{-1}\int_0^{a_m}b(a)\, (v_\lambda\psi)(a)\ \rd a\ .
$$
The same calculations as in the proof of Lemma~\ref{L1} yield
$$
\int_0^{a_m}b(a)\, \phi(a)\ \rd a=\int_0^\infty e^{-\lambda t}\int_0^{a_m}b(a)\, [S(t)\psi](a)\ \rd a\,\rd t = (1-Q_\lambda)^{-1}\int_0^{a_m}b(a)\, (v_\lambda\psi)(a)\ \rd a =\phi(0)\ .
$$
Conversely, if $\phi \in  \E_1$ satisfies \eqref{AW} then $\psi:=(\partial_a+A_\lambda)\phi\in\E_0$ by \eqref{10a}
and, since $\phi(t)\in E_1$ for a.a. $t\in J$ we have
$$
\frac{\partial}{\partial t}\big(\Pi_\lambda(a,t)\phi(t)\big)=\Pi_\lambda(a,t)\psi(t)
$$
for a.a. $t\in J$ and $a>t$ due to the fact that $\Pi_\lambda$ is the parabolic evolution operator for $A_\lambda$. Integration with respect to $t$ gives
$$
(v_\lambda\psi)(a)=\phi(a)-\Pi_\lambda(a,0)\phi(0)
$$
from which
$$
(w_\lambda\psi)(a)= \Pi_\lambda(a,0)(1-Q_\lambda)^{-1}\int_0^{a_m} b(s)\, \big[\phi(s)-\Pi_\lambda(s,0)\phi(0)\big] \,\rd s = \Pi_\lambda(a,0)\,\phi(0)
$$
for $a\in J$, whence $$\phi=v_\lambda\psi+w_\lambda\psi=(\lambda+\A)^{-1}\psi\in \mathrm{dom}(-\A)\ .$$
Finally, $(\lambda+\A)\phi=\psi=(\partial_a+A_\lambda)\phi$ and the proof is complete.
\end{proof}

\begin{rem}\label{RR}
Theorem~\ref{Prop1} and Theorem~\ref{C1} show that for any initial value $\phi\in\E_1$ satisfying \eqref{AW}, the unique solution $u\in C(\R^+,\E_1)\cap C^1(\R^+,\E_0)$ to \eqref{1}-\eqref{3} is given by $u(t)=S(t)\phi$, $t\ge 0$, with $S(t)$ defined in \eqref{100}. If $\phi$ is only in $\E_0$, then $u(t)=S(t)\phi$, $t\ge 0$, defines a mild solution in $C(\R^+,\E_0)$. Moreover, $u(t)\in\E_0^+$ for $t\ge 0$ if $\phi\in\E_0^+$.
\end{rem}

\section{Stability of the Trivial Solution and Asynchronous Exponential Growth}

We now shall characterize the growth bound $\omega(-\A)$ of $-\A$. We first characterize the point spectrum of $-\A$ and extend formula~\eqref{inv} to a larger class of $\lambda$ values. 

\begin{lem}\label{L2}
(i) Let $\lambda\in\C$ with $\mathrm{Re}\, \lambda>-\varpi$ if $a_m=\infty$ and let $m\in \N\setminus\{0\}$. Then $\lambda\in \sigma_p(-\A)$ with geometric multiplicity $m$ if and only if $1\in \sigma_p(Q_{\lambda})$ with geometric multiplicity $m$.\\
(ii) Formula \eqref{inv} holds for any $\lambda\in \varrho(-\A)$ provided $\mathrm{Re}\, \lambda>-\varpi$ if $a_m=\infty$. 
\end{lem}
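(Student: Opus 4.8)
The plan for part (i) is to transfer the eigenvalue problem for $-\A$ to one for $Q_\lambda$ by means of Theorem~\ref{C1}. If $\lambda\in\sigma_p(-\A)$ and $\phi\in\ker(\lambda+\A)\setminus\{0\}$, then $\phi\in\E_1$ with $\partial_a\phi+A_\lambda\phi=0$ and $\phi(0)=\int_0^{a_m}b(a)\phi(a)\,\rd a$. Since $\mathrm{Re}\,\lambda>-\varpi$ when $a_m=\infty$, we have $\Pi_\lambda(\cdot,0)\in\ml(E_\varsigma,\E_1)$, and maximal regularity of $A_\lambda$ forces $\phi(a)=\Pi_\lambda(a,0)\phi(0)$; inserting this into the boundary condition gives $\phi(0)=Q_\lambda\phi(0)$. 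Conversely, any $x\in\ker(1-Q_\lambda)$ lies in $E_\varsigma$ by the regularizing property~\eqref{rp}, so $\phi:=\Pi_\lambda(\cdot,0)x\in\E_1$ is well defined and satisfies $(\lambda+\A)\phi=0$ together with~\eqref{AW}. Hence $\phi\mapsto\phi(0)$ is a linear map $\ker(\lambda+\A)\to\ker(1-Q_\lambda)$; it is injective because uniqueness of the Cauchy problem gives $\phi=\Pi_\lambda(\cdot,0)\phi(0)$, and it is surjective by the previous sentence. Since~\eqref{rp} shows $\ker(1-Q_\lambda)$ is the same whether $Q_\lambda$ is viewed on $E_0$ or on $E_\varsigma$ (compare Lemma~\ref{Q}), we conclude $\dim\ker(\lambda+\A)=\dim\ker(1-Q_\lambda|_{E_0})$, i.e.\ the geometric multiplicities coincide.

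For part (ii) I would first use (i) to secure invertibility of $1-Q_\lambda$. If $\lambda\in\varrho(-\A)$ with $\mathrm{Re}\,\lambda>-\varpi$ (when $a_m=\infty$), then $1\notin\sigma_p(Q_\lambda)$ by (i); since $Q_\lambda\in\ml(E_0)$ is compact, Riesz--Schauder theory upgrades this to $1\notin\sigma(Q_\lambda)$, so $1-Q_\lambda$ is boundedly invertible on $E_0$, and on $E_\varsigma$ by Lemma~\ref{Q}. Therefore the right-hand side of~\eqref{inv} defines a bounded operator $R_\lambda\in\ml(\E_0)$, and by the reasoning of Remark~\ref{R1} we have $R_\lambda\phi=v_\lambda\phi+w_\lambda\phi\in\E_1$ with $\partial_a(R_\lambda\phi)+A_\lambda(R_\lambda\phi)=\phi$ and $(R_\lambda\phi)(0)=(w_\lambda\phi)(0)=(1-Q_\lambda)^{-1}\int_0^{a_m}b(s)(v_\lambda\phi)(s)\,\rd s$.

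It then remains to verify that $R_\lambda\phi$ satisfies the nonlocal boundary condition~\eqref{AW}. Writing $w(0):=(R_\lambda\phi)(0)$ and using $(w_\lambda\phi)(a)=\Pi_\lambda(a,0)w(0)$ together with $\int_0^{a_m}b(a)\Pi_\lambda(a,0)\,\rd a=Q_\lambda$, one computes
\bqnn
\int_0^{a_m}b(a)(R_\lambda\phi)(a)\,\rd a=\int_0^{a_m}b(a)(v_\lambda\phi)(a)\,\rd a+Q_\lambda w(0)\ ,
\eqnn
while the definition of $w(0)$ says precisely $(1-Q_\lambda)w(0)=\int_0^{a_m}b(a)(v_\lambda\phi)(a)\,\rd a$, so the right-hand side above equals $w(0)=(R_\lambda\phi)(0)$. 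Thus $R_\lambda\phi\in\mathrm{dom}(-\A)$ by Theorem~\ref{C1} and $(\lambda+\A)R_\lambda\phi=\partial_a(R_\lambda\phi)+A_\lambda(R_\lambda\phi)=\phi$; since $\lambda+\A$ is injective on $\mathrm{dom}(-\A)$ for $\lambda\in\varrho(-\A)$, this forces $R_\lambda=(\lambda+\A)^{-1}$, which is~\eqref{inv}.

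I do not expect a genuinely hard step here: the two parts are essentially translations through Theorem~\ref{C1} and Remark~\ref{R1}. The main points requiring care are the bookkeeping of interpolation spaces — making sure trace values and eigenvectors actually lie in $E_\varsigma$, which is exactly what~\eqref{rp} and Lemma~\ref{Q} provide — and the use of compactness of $Q_\lambda$ to pass from the absence of the eigenvalue $1$ to bounded invertibility of $1-Q_\lambda$, which is the hinge connecting (i) and (ii). One could alternatively prove (ii) by analytically continuing~\eqref{inv} from the half-plane $\mathrm{Re}\,\lambda>-\varpi+\zeta$ of Lemma~\ref{L1}, but this would require an argument that the relevant portion of $\varrho(-\A)$ is connected, so the direct verification via Theorem~\ref{C1} seems cleaner.
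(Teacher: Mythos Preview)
Your proof is correct and follows essentially the same approach as the paper: part~(i) is argued identically via Theorem~\ref{C1} and uniqueness of the Cauchy problem, and part~(ii) hinges on the same key step---using compactness of $Q_\lambda$ together with part~(i) to secure that $1-Q_\lambda$ is invertible. The only cosmetic difference is that in (ii) you verify the candidate $R_\lambda$ inverts $\lambda+\A$, whereas the paper starts from $\psi=(\lambda+\A)^{-1}\phi$ and derives the formula; the content is the same.
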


\begin{proof}
(i) Let $\lambda\in\C$ with $\mathrm{Re}\, \lambda>-\varpi$ if $a_m=\infty$. Suppose $\lambda\in \sigma_p(-\A)$ has geometric multiplicity $m$ so that there are linearly independent $\phi_1,...,\phi_m\in\mathrm{dom}(-\A)$ with $(\lambda+\A)\phi_j=0$ for $j=1,...,m$. From Theorem~\ref{C1} we deduce 
$$\phi_j(a)=\Pi_\lambda(a,0)\phi_j(0)\qquad\text{with}\qquad \phi_j(0)=Q_\lambda \phi_j(0)\ .$$
Hence, $\phi_1(0),...\phi_m(0)$ are necessarily linearly independent eigenvectors of $Q_\lambda$ corresponding to the eigenvalue~$1$. 
Now, suppose $1\in \sigma_p(Q_{\lambda})$ has geometric multiplicity $m$ so that there are linearly independent $\Phi_1,...\Phi_m\in E_\varsigma$ with $Q_\lambda\Phi_j=\Phi_j$ for $j=1,...,m$. Put $\phi_j:=\Pi_\lambda(\cdot,0)\Phi_j\in\E_1$ and note that, for $j=1,...,m$, 
$$
\partial_a\phi_j+A_\lambda\phi_j=0\ ,\quad
\int_0^{a_m}b(a)\, \phi_j(a)\ \rd a=Q_\lambda \Phi_j=\Phi_j=\phi_j(0)\ .
$$
Thus $\phi_j\in\mathrm{dom}(-\A)$ and $(\lambda+\A)\phi_j=0$ by Theorem~\ref{C1}, i.e. $\lambda\in \sigma_p(-\A)$. If $\alpha_1,...,\alpha_m$ are any scalars, the unique solvability of the Cauchy problem
$$
\partial_a\phi+A_\lambda\phi=0\ ,\quad a\in (0,a_m)\ ,\qquad \phi(0)=\sum_j \alpha_j\, \Phi_j
$$
ensures that $\phi_1,...,\phi_m$ are linearly independent. This proves (i).\\
(ii) Let $\lambda\in \varrho(-\A)$ with $\mathrm{Re}\, \lambda>-\varpi$ if $a_m=\infty$. Then $A_\lambda$ has maximal regularity due to \eqref{12} and, by Theorem~\ref{C1}, $(\lambda+\A)\psi=\phi$ with $\phi\in\E_0$ and $\psi\in\E_1$ if and only if $(\partial_a+A_\lambda)\psi=\phi$ with $$\psi(0)=\int_0^{a_m}b(a)\,\psi(a)\ \rd a\ ,$$ 
that is,
\begin{align*}
\psi(a)&=\Pi_\lambda(a,0)\psi(0)+\int_0^a\Pi_\lambda(a,\sigma)\,\phi(\sigma)\ \rd \sigma\ ,\\
(1-Q_\lambda)\psi(0)&=\int_0^{a_m}b(a)\int_0^a\Pi_\lambda(a,\sigma)\,\phi(\sigma)\ \rd\sigma\,\rd a\ .
\end{align*}
Since $\lambda\in \varrho(-\A)$ and since $Q_\lambda$ is compact, (i) ensures that $1\in\varrho(Q_\lambda)$, hence $1-Q_\lambda$ is invertible and so
$$
\psi(0)=(1-Q_\lambda)^{-1}\int_0^{a_m}b(a)\int_0^a\Pi_\lambda(a,\sigma)\,\phi(\sigma)\ \rd\sigma\,\rd a\ .
$$
As  $\psi=(\lambda+\A)^{-1}\phi$, this gives formula \eqref{inv}.
\end{proof}

Recall that the $\alpha$-growth bound $\omega_1(-\A)$ of $-\A$ is defined by
$$
\omega_1(-\A):=\lim_{t\rightarrow\infty}\frac{1}{t} \log \big(\alpha(S(t))\big)\ ,
$$
where $\alpha$ denotes Kuratowski's measure of non-compactness. That is, if $B$ is a subset of a normed vector space $X$, then $\alpha(B)$ is defined as the infimum over all $\delta>0$ such that $B$ can be covered with finitely many sets of diameter less than $\delta$, and if $T$ is a bounded operator on $X$, then $\alpha(T)$ is the infimum over all $\varepsilon>0$ such that $\alpha(T(B))\le \varepsilon \alpha(B)$ for any bounded set $B\subset X$. Recall that $\omega_1(-\A)\le \omega(-\A)$.\\

We next provide bounds on $\omega_1(-\A)$.

\begin{lem}\label{p4} 
There holds
$$ 
\sup\{\mathrm{Re}\, \lambda \, ;\, \lambda\in\sigma_e(-\A)\}\, \le \, \omega_1(-\A)\, \le \, -\varpi\ .
$$
Moreover, if $a_m<\infty$, then $\omega_1(-\A)=-\infty$ and the semigroup $\{S(t)\,;\, t\ge 0\}$ is eventually compact.
\end{lem}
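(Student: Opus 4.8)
The plan is to split the semigroup $S(t)$ into the part coming from transport along characteristics (the first line of \eqref{100}) and the part coming from the birth term (the second line of \eqref{100}), and to show that the characteristic part has the right norm decay while the birth part is compact. Concretely, for fixed $t>0$ write $S(t) = S_1(t) + S_2(t)$, where
\begin{equation*}
[S_1(t)\phi](a) := \Pi(a,a-t)\,\phi(a-t)\,\mathbf{1}_{\{a\ge t\}}\ ,\qquad
[S_2(t)\phi](a) := \Pi(a,0)\,B_\phi(t-a)\,\mathbf{1}_{\{a< t\}}\ .
\end{equation*}
For $S_1(t)$ the estimate \eqref{10aa} with $\alpha=\beta=\beta_1=0$ gives $\|\Pi(a,a-t)\|_{\ml(E_0)}\le M e^{-\varpi t}$, so that $\|S_1(t)\phi\|_{\E_0}\le M e^{-\varpi t}\|\phi\|_{\E_0}$; this already yields $\omega_1(-\A)\le -\varpi$ once we know $S_2(t)$ is compact, since then $\alpha(S(t))=\alpha(S_1(t))\le \|S_1(t)\|\le M e^{-\varpi t}$. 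The lower bound $\sup\{\mathrm{Re}\,\lambda\,;\,\lambda\in\sigma_e(-\A)\}\le\omega_1(-\A)$ is the standard general fact relating the essential spectral radius of $S(t)$ to $\omega_1$ together with the spectral inclusion $e^{t\sigma_e(-\A)}\subset\sigma_e(S(t))$ for strongly continuous semigroups, which I would simply cite.

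\textbf{Compactness of $S_2(t)$.} The key point is that $[S_2(t)\phi](a)=\Pi(a,0)B_\phi(t-a)$ for $a\in(0,t)$ (and $0$ otherwise), and by Lemma~\ref{A} the map $\phi\mapsto B_\phi$ is bounded from $\E_0$ into $C(\R^+,E_0)$ with, by \eqref{66}, $B_\phi(s)\in E_\theta$ and $\|B_\phi(s)\|_{E_\theta}\le N s^{-\theta}e^{(-\varpi+\zeta(\theta))s}\|\phi\|_{\E_0}$ for any $\theta\in[0,1)$. Fix $\theta\in(0,1/p)$. Then for $a\in(0,t)$ the regularizing estimate \eqref{10aa} (with $\beta=\theta$, $\alpha\in(\theta,1)$, $\beta_1=\theta$) gives
\begin{equation*}
\|\Pi(a,0)B_\phi(t-a)\|_{E_\alpha}\le M a^{\theta-\alpha}\,\|B_\phi(t-a)\|_{E_\theta}\le M N a^{\theta-\alpha}(t-a)^{-\theta}e^{(-\varpi+\zeta(\theta))(t-a)}\|\phi\|_{\E_0}\ ,
\end{equation*}
so that $a\mapsto[S_2(t)\phi](a)$ lies in $L_p((0,t),E_\alpha)$ with a bound uniform over $\|\phi\|_{\E_0}\le 1$, provided $\alpha<1$ is chosen so that $p(\alpha-\theta)<1$, i.e. $\alpha-\theta<1/p$. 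Since $E_\alpha$ embeds compactly in $E_0$, this gives compactness in the $E_0$-fibre but not yet in $L_p$; to upgrade to compactness in $\E_0=L_p((0,a_m),E_0)$ I would additionally show $E_0$-valued equicontinuity of the shifts $a\mapsto[S_2(t)\phi](a)$, which follows from the strong continuity of $a\mapsto\Pi(a,0)$ together with the continuity of $s\mapsto B_\phi(s)$ and the uniform bounds above, and then invoke a vector-valued Kolmogorov–Riesz / Fréchet–Kolmogorov compactness criterion in $L_p((0,a_m),E_0)$. Alternatively, one can factor $S_2(t)$ through $L_p((0,a_m),E_\alpha)\cap W^1_p((0,a_m),E_0)$ using that $a\mapsto\Pi(a,0)B_\phi(t-a)$ is differentiable in $E_0$ with controlled derivative (differentiating the two factors and using \eqref{10aa}); the embedding of that space into $\E_0$ is compact, which is cleaner. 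I expect this upgrade from fibrewise to $L_p$-compactness to be the main technical obstacle.

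\textbf{The finite-age case.} If $a_m<\infty$, take $t\ge a_m$. Then $a-t<0$ for every $a\in(0,a_m)$, so the characteristic branch in \eqref{100} is vacuous and $S(t)=S_2(t)$, which is compact by the argument above; hence $S(t)$ is compact for all $t\ge a_m$, i.e. the semigroup is eventually compact. For an eventually compact semigroup the essential growth bound is $-\infty$: indeed $\alpha(S(t))=0$ for $t\ge a_m$, so $\frac1t\log\alpha(S(t))\to-\infty$, giving $\omega_1(-\A)=-\infty$. This completes the proof; the only genuinely new work beyond citing standard semigroup facts is the compactness of $S_2(t)$, for which the regularizing estimates \eqref{10aa} and \eqref{66} are exactly what is needed.
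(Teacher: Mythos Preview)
Your proposal is correct and follows essentially the same approach as the paper: the same decomposition $S(t)=S_1(t)+S_2(t)$, the same norm bound $\|S_1(t)\|\le Me^{-\varpi t}$ from \eqref{10aa}, and compactness of $S_2(t)$ via a vector-valued Kolmogorov--Riesz criterion fed by the regularizing estimates \eqref{10aa} and \eqref{66}. The only refinement the paper makes explicit is that the uniform translation-equicontinuity step uses the quantitative operator-norm H\"older bound $\|\Pi(a+h,0)-\Pi(a,0)\|_{\ml(E_\xi,E_0)}\le c(t)h^\xi$ (from \cite[II.Eq.(5.3.8)]{LQPP}) rather than mere strong continuity of $a\mapsto\Pi(a,0)$, so your sketch should be sharpened at that point.
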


\begin{proof}
The first inequality of the assertion is generally true for strongly continuous semigroups \cite[Prop.4.13]{WebbSpringer}. We thus merely have to show that $\omega_1(-\A) \le  -\varpi$ what can be done along the lines of the scalar case \cite[Thm.4.6]{WebbSpringer}: Let $t>0$ and write $S(t)=U(t)+W(t)$,  where $U(t), W(t)\in\ml(\E_0)$ are defined as
\bqnn
     \big[U(t)\, \phi\big](a)\, :=\, \left\{ \begin{aligned}
    &0\ ,& & a\in (0,t)\ ,\\
    & \big[S(t)\, \phi\big](a)\ , & & a\in (t,a_m)\ ,
    \end{aligned}
   \right. \,\qquad
    \big[W(t)\, \phi\big](a)\, :=\, \left\{ \begin{aligned}
    &\big[S(t)\, \phi\big](a)\ ,& & a\in (0,t)\ ,\\
    & \ 0\ , & & a\in (t,a_m)\ ,
    \end{aligned}
   \right.
    \eqnn
for $a\in J$, $\phi\in\E_0$. Observing that $\alpha(S(t))\le \alpha(U(t))+\alpha(W(t))$ and, by \eqref{10aa} and \eqref{100},
\bqn\label{111}
\alpha(U(t))\le \|U(t)\|_{\ml(\E_0)}\le Me^{-\varpi t}\ ,\quad t<a_m\ ,
\eqn
the assertion follows from the definition of $ \omega_1(-\A)$ provided we can show  that $\alpha(W(t))=0$. For this it suffices to show that if $B$ is any bounded subset of $\E_0$, then $W(t)B$ is relatively compact in $\E_0$. We use Kolmogorov's compactness criterion \cite[Thm.A.1]{GutmanSIMA}. Without loss of generality we may assume that $a_m=\infty$. Clearly, Theorem~\ref{Prop1} ensures that $W(t)B$ is bounded in $\E_0$. If $\phi\in\E_0$ and $h>0$, then 
\bqnn
\begin{split}
\int_0^\infty \| [W(t)\phi](a+h)&-[W(t)\phi](a)\|_{E_0}^p\ \rd a \\
& \le  \int_0^{t-h} \| \Pi(a+h,0)-\Pi(a,0)\|_{\ml(E_\xi,E_0)}^p\, \|B_\phi(t-a-h)\|_{E_\xi}^p\ \rd a\\ 
& \quad + \int_0^{t-h} \|\Pi(a,0)\|_{\ml(E_0)}^p\, \|B_\phi(t-a-h)-B_\phi(t-a)\|_{E_0}^p\ \rd a\\ 
& \quad  + \int_{t-h}^t \| \Pi(a,0)\|_{\ml(E_0)}^p\, \|B_\phi(t-a)\|_{E_0}^p\ \rd a\ ,
\end{split}
\eqnn
where $\xi\in (0,1/p)$.
On using \eqref{10aa} and Lemma~\ref{A} it is readily seen that the second and third integral on the right side tend to zero as $h\rightarrow 0$, uniformly with respect to $\phi\in B$. For the first integral we use the fact (see \cite[II.Eq.(5.3.8)]{LQPP}) that
$$
\| \Pi(a+h,0)-\Pi(a,0)\|_{\ml(E_\xi,E_0)}\le c(t)h^{\xi}\ ,\quad a\le t\ ,
$$
to obtain from Lemma~\ref{A} the estimate
$$
\int_0^{t-h} \| \Pi(a+h,0)-\Pi(a,0)\|_{\ml(E_\xi,E_0)}^p\, \|B_\phi(t-a-h)\|_{E_\xi}^p\ \rd a\le c(t)^p h^{\xi p}\int_0^{t-h} (t-a-h)^{-\xi p}\ \rd a\ \|\phi\|_{\E_0}^p
$$
with right hand side tending to zero as $h\rightarrow 0$, uniformly with respect to $\phi\in B$. Next, by \eqref{10aa}, \eqref{100}, and Lemma~\ref{A},
$$
\|[W(t)\phi](a)\|_{E_\xi}\le c(B,t)\, (t-a)^{-\xi}\ ,\quad a<t\ ,
$$
for some constant $c(B,t)$. Given $\varepsilon\in (0,t)$ let $R_\varepsilon$ be the $E_0$-closure of the ball in $E_\xi$ centered at $0$ of radius $c(B,t)\varepsilon^{-\xi}$. Then $R_\varepsilon$ is compact in $E_0$ since $E_\xi$ embeds compactly in $E_0$ and
$$
[W(t)\phi](a)\in R_\varepsilon\ ,\quad a\in \R^+\setminus[t-\varepsilon ,t]\ ,\quad \phi\in B\ .
$$
Therefore, \cite[Thm.A.1]{GutmanSIMA} implies that $W(t)B$ is relatively compact in $\E_0$, hence \mbox{$\omega_1(-\A)\le -\varpi$}. Finally, if $a_m<\infty$ and $t>a_m$, then $U(t)=0$ and so $S(t)=W(t)$. This proves the lemma.
\end{proof}

\begin{lem}\label{C2}
Let $\lambda\in\sigma(-\A)$ with $\mathrm{Re}\, \lambda>-\varpi$ if $a_m=\infty$. Then  
$\lambda\in\sigma_p(-\A)\setminus\sigma_e(-\A)$ and $1\in\sigma_p(Q_{\lambda})$.
Moreover, $\lambda$ is isolated in $\sigma(-\A)$  and a pole of the resolvent $[\tau\mapsto (\tau+\A)^{-1}]$. The residue of the resolvent at~$\lambda$,
$$
P_{\lambda}:=\frac{1}{2\pi i}\int_\Gamma (\tau+\A)^{-1}\ \rd \tau\ ,
$$
is a projection on $\E_0$ and $\E_0=\mathrm{im}(P_{\lambda})\oplus \mathrm{im}(1-P_{\lambda})$ with $\mathrm{im}(P_{\lambda})=\mathrm{ker}(\lambda+\A)^m$, where $\Gamma$ is a positively oriented closed curve in the complex plane such that no point in $\sigma(-\A)$ lies in or on $\Gamma$ and  $m\in\N$ is the order of the pole $\lambda$.
\end{lem}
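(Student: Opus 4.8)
The plan is to handle the three assertions --- (a) $\lambda\in\sigma_p(-\A)\setminus\sigma_e(-\A)$ together with $1\in\sigma_p(Q_\lambda)$, (b) $\lambda$ isolated in $\sigma(-\A)$ and a pole of the resolvent, (c) the spectral‑projection identities for $P_\lambda$ --- by assembling Lemma~\ref{p4}, Lemma~\ref{L1}, Lemma~\ref{L2}, the compactness of $Q_\lambda$, and the standard local theory of isolated spectral points.

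\emph{First step: discarding the essential spectrum.} By Lemma~\ref{p4} one has $\sup\{\mathrm{Re}\,\mu\,;\,\mu\in\sigma_e(-\A)\}\le\omega_1(-\A)\le-\varpi$ when $a_m=\infty$, and $\sigma_e(-\A)=\emptyset$ (indeed $\omega_1(-\A)=-\infty$) when $a_m<\infty$. In either case the hypothesis $\mathrm{Re}\,\lambda>-\varpi$ --- which is vacuous, hence automatically in force, if $a_m<\infty$ --- forces $\lambda\notin\sigma_e(-\A)$. Since $\lambda\in\sigma(-\A)$, unwinding the definition of $\sigma_e$ immediately yields that $\mathrm{im}(\lambda+\A)$ is closed, that $\lambda$ is isolated in $\sigma(-\A)$, and that $\dim\,\mathrm{ker}(\lambda+\A)<\infty$.

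\emph{Second step: $1\in\sigma_p(Q_\lambda)$ and $\lambda\in\sigma_p(-\A)$.} Suppose, for contradiction, that $1\notin\sigma(Q_\lambda)$. As $Q_\lambda$ is compact, $1-Q_\lambda$ is then boundedly invertible on $E_0$, hence also on $E_\varsigma$ by \eqref{rp} and Lemma~\ref{Q}. Since $\mathrm{Re}\,\lambda>-\varpi$, $A_\lambda$ has maximal $L_p$‑regularity by \eqref{12}; repeating the computations of Lemma~\ref{L1}, Remark~\ref{R1}, and the second half of the proof of Theorem~\ref{C1}, the operator $R_\lambda\colon\E_0\to\E_1$ given by the right‑hand side of \eqref{inv} maps into $\mathrm{dom}(-\A)$, satisfies $(\lambda+\A)R_\lambda=\mathrm{id}_{\E_0}$, and $\lambda+\A$ is injective on $\mathrm{dom}(-\A)$ (any $\psi$ in its kernel obeys $\psi=\Pi_\lambda(\cdot,0)\psi(0)$ with $(1-Q_\lambda)\psi(0)=0$, so $\psi=0$). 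Thus $\lambda\in\varrho(-\A)$, contradicting $\lambda\in\sigma(-\A)$. Therefore $1\in\sigma(Q_\lambda)=\sigma_p(Q_\lambda)$ (the latter since $Q_\lambda$ is compact and $1\neq0$), and Lemma~\ref{L2}(i) then gives $\lambda\in\sigma_p(-\A)$; with the first step this proves (a).

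\emph{Third step: the pole structure.} Being isolated in $\sigma(-\A)$, $\lambda$ carries the Riesz projection $P_\lambda=\frac{1}{2\pi i}\int_\Gamma(\tau+\A)^{-1}\,\rd\tau$, which is a bounded projection commuting with $-\A$ and yielding $\E_0=\mathrm{im}(P_\lambda)\oplus\mathrm{im}(1-P_\lambda)$ with $\sigma(-\A|_{\mathrm{im}(P_\lambda)})=\{\lambda\}$ and $\sigma(-\A|_{\mathrm{im}(1-P_\lambda)})=\sigma(-\A)\setminus\{\lambda\}$; this is classical for an isolated spectral value of a closed operator. The only real point is to show $\mathrm{im}(P_\lambda)$ is finite‑dimensional: granted that, the textbook theory of poles of the resolvent gives that $\lambda$ is a pole of finite order $m\in\N$, that $P_\lambda$ is its residue, and that $\mathrm{im}(P_\lambda)=\mathrm{ker}(\lambda+\A)^m$ ($m$ being the ascent of $\lambda+\A$), which settles (b) and (c). For finite‑dimensionality I would exploit \eqref{inv}: by Lemma~\ref{L2}(ii), on a punctured disk about $\lambda$ one has $(\tau+\A)^{-1}=v_\tau+w_\tau$ with $w_\tau\phi=\Pi_\tau(\cdot,0)(1-Q_\tau)^{-1}\int_0^{a_m}b(s)(v_\tau\phi)(s)\,\rd s$, where $\tau\mapsto v_\tau\in\ml(\E_0,\E_1)$, $\tau\mapsto\Pi_\tau(\cdot,0)\in\ml(E_\varsigma,\E_1)$, and the operator $\phi\mapsto\int_0^{a_m}b(s)(v_\tau\phi)(s)\,\rd s\in\ml(\E_0,E_\varsigma)$ all depend holomorphically on $\tau$ (no inversion entering). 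Since $\tau\mapsto Q_\tau$ is a holomorphic family of compact operators on $E_\varsigma$ over the connected set $\{\mathrm{Re}\,\tau>-\varpi\}$ (resp.\ over $\C$ if $a_m<\infty$) and $1-Q_{\tau_0}$ is invertible for some large real $\tau_0$ by Lemma~\ref{L0}, the analytic Fredholm theorem makes $\tau\mapsto(1-Q_\tau)^{-1}$ meromorphic there with finite‑rank principal part at $\lambda$; multiplying by the holomorphic factors in $w_\tau$ keeps the principal part of $(\tau+\A)^{-1}$ at $\lambda$ of finite rank, so $\lambda$ is a pole of finite order and $P_\lambda$ has finite rank. (Alternatively, as $\mathrm{Re}\,\lambda>-\varpi\ge\omega_1(-\A)$, one may simply invoke the general fact that a spectral value of a $C_0$‑semigroup generator lying strictly to the right of the $\alpha$‑growth bound is a pole of the resolvent of finite algebraic multiplicity, in the spirit of the scalar discussion in \cite{WebbSpringer}.)

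The main obstacle is precisely this last step --- proving that the Riesz projection $P_\lambda$ has finite rank, equivalently that $\lambda$ is a pole of finite order. The first two steps are a direct combination of Lemma~\ref{p4}, Lemma~\ref{L1}, Lemma~\ref{L2} and the compactness of $Q_\lambda$, and the identities tying $P_\lambda$, the residue, and $\mathrm{ker}(\lambda+\A)^m$ together are standard spectral theory; the genuine input is either the meromorphic Fredholm structure of $\tau\mapsto(1-Q_\tau)^{-1}$ fed into \eqref{inv}, or the general semigroup estimate $\omega_1(-\A)\le-\varpi$. The only mild technical care required is to verify that pre‑ and post‑composition with the $\tau$‑holomorphic operator‑valued factors in \eqref{inv} does not enlarge the (finite) rank of the principal part.
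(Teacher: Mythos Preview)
Your proof is correct, but it takes a different route from the paper's. The paper's argument is a two--line appeal to abstract theory: once Lemma~\ref{p4} gives $\lambda\in\sigma(-\A)\setminus\sigma_e(-\A)$, Browder's characterization of the essential spectrum (for closed operators with nonempty resolvent set, $\sigma\setminus\sigma_e$ consists exactly of isolated eigenvalues that are poles of the resolvent of finite algebraic multiplicity) delivers at once that $\lambda$ is a pole and $\lambda\in\sigma_p(-\A)$; Lemma~\ref{L2}(i) then yields $1\in\sigma_p(Q_\lambda)$, and the Laurent--series identities for $P_\lambda$ are quoted from \cite{Kato,Yosida,WebbBook}.

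By contrast you work from the inside out, exploiting the concrete structure of the problem: you \emph{first} prove $1\in\sigma(Q_\lambda)$ by showing that invertibility of $1-Q_\lambda$ together with maximal regularity of $A_\lambda$ lets one build the resolvent of $-\A$ at $\lambda$ via formula~\eqref{inv} (essentially the converse of Lemma~\ref{L2}(ii)), and only then deduce $\lambda\in\sigma_p(-\A)$ via Lemma~\ref{L2}(i). For the pole/finite--rank assertion you replace the black--box Browder citation by the analytic Fredholm theorem applied to the compact holomorphic family $\tau\mapsto Q_\tau$, feeding the resulting meromorphy of $(1-Q_\tau)^{-1}$ through the explicit decomposition $(\tau+\A)^{-1}=v_\tau+w_\tau$. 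This is more self--contained and makes transparent \emph{why} the Riesz projection has finite rank in this particular model (it inherits it from the finite--rank principal part of $(1-Q_\tau)^{-1}$), at the cost of checking the holomorphy of the various operator--valued factors---a routine but nonzero verification. Your parenthetical ``alternative'' (spectral points strictly to the right of $\omega_1(-\A)$ are poles of finite algebraic multiplicity) is in fact the content of the paper's Browder citation, so that shortcut would collapse your argument onto the paper's.
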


\begin{proof} 
Let $\lambda\in\sigma(-\A)$ with $\mathrm{Re}\, \lambda>-\varpi$ if $a_m=\infty$.  
Since $-\A$ is closed and  $\lambda\in\sigma(-\A)\setminus\sigma_e(-\A)$ by Lemma~\ref{p4}, it follows from \cite{Browder} that $\lambda\in\sigma_p(-\A)$ is isolated in $\sigma(-\A)$  and a pole of the resolvent $[\tau\mapsto (\tau+\A)^{-1}]$. In particular, $\lambda\in\sigma_p(-\A)$ implies $1\in\sigma_p(Q_{\lambda})$ by Lemma~\ref{L2}(i). Since $\lambda$ is isolated in $\sigma(-\A)$, the remaining assertions follow from Laurent series theory described in \cite{Kato,Yosida}, for details we refer to \cite[Prop.4.8, Prop.4.11]{WebbBook}. 
\end{proof}

The following characterization of the peripheral spectrum $\sigma_0(-\A)$ in terms of the spectral bound $s(-\A)$ in the Banach lattice $\E_0$ is a useful tool for our purpose. Its proof is based on \cite{Greiner84} and found in \cite[Prop.2.5]{Webb_TAMS_87}:

\begin{prop}\label{LLL}
If $s(-\A)>\omega_1(-\A)$, then $\sigma_0(-\A)=\{s(-\A)\}$.
\end{prop}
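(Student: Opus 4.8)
The plan is to deduce this from the general theory of positive semigroups on Banach lattices, for which the relevant reference is Greiner's work cited as \cite{Greiner84} and its reformulation in \cite[Prop.2.5]{Webb_TAMS_87}. The crucial structural ingredients that need to be in place are: (i) $\{S(t)\,;\,t\ge 0\}$ is a positive strongly continuous semigroup on the Banach lattice $\E_0=L_p(J,E_0)$ (Theorem~\ref{Prop1}), and (ii) the hypothesis $s(-\A)>\omega_1(-\A)$ guarantees, via Lemma~\ref{p4} and Lemma~\ref{C2}, that $s(-\A)$ is actually attained as a spectral value which is a pole of the resolvent lying in $\sigma_p(-\A)\setminus\sigma_e(-\A)$. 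So first I would record that under the stated hypothesis $s(-\A)$ is finite, $s(-\A)\in\sigma(-\A)$, and $s(-\A)$ is isolated in the spectrum.

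Next I would invoke the cyclicity property of the peripheral point spectrum of a positive semigroup generator. The key general fact (Greiner) is that if $-\A$ generates a positive $C_0$-semigroup on a Banach lattice and $\lambda\in\sigma_0(-\A)$ is a pole of the resolvent of order $m$, then $s(-\A)$ is itself a pole of order $\ge m$ and, more relevantly, the whole peripheral spectrum is cyclic and the boundary point spectrum consists of poles of the same order. Combined with the fact that, by $s(-\A)>\omega_1(-\A)$, every point of $\sigma_0(-\A)$ lies outside $\sigma_e(-\A)$ and is hence a pole of the resolvent (Lemma~\ref{C2}), one gets that each $\lambda\in\sigma_0(-\A)$ is an eigenvalue of $-\A$ of the form $\lambda=s(-\A)+i\tau$ with an associated positive structure. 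The decisive step is then to rule out nonzero imaginary parts $\tau$, which is where the specific structure of our operator enters: by Lemma~\ref{L2}(i), $\lambda\in\sigma_p(-\A)$ forces $1\in\sigma_p(Q_\lambda)$, and similarly $s(-\A)\in\sigma_p(-\A)$ forces $r(Q_{s(-\A)})=1$ by Lemma~\ref{Q}; but $|Q_\lambda|\le Q_{\mathrm{Re}\,\lambda}$ in the sense of positive operators (since $|\Pi_\lambda(a,0)|=e^{-(\mathrm{Re}\,\lambda)a}\Pi(a,0)$ and $b(a)\in\ml_+(E_0)$), whence $1=r(Q_\lambda)\le r(|Q_\lambda|)\le r(Q_{\mathrm{Re}\,\lambda})\le r(Q_{s(-\A)})=1$, and the strict monotonicity of $\lambda\mapsto r(Q_\lambda)$ from Lemma~\ref{L0} forces $\mathrm{Re}\,\lambda=s(-\A)$ with no further constraint — so one needs the irreducibility assumption \eqref{14} to push this to $\tau=0$, using that an irreducible positive operator has a one-dimensional peripheral eigenspace and the Perron–Frobenius-type argument in \cite{Greiner84}.

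In more detail, the final argument I would give runs: suppose $\lambda=s(-\A)+i\tau\in\sigma_0(-\A)$. By Lemma~\ref{C2}, $1\in\sigma_p(Q_\lambda)$, so there is a nonzero $\Phi\in E_\varsigma$ (in fact in $E_1$) with $Q_\lambda\Phi=\Phi$; taking absolute values and using the domination $|Q_\lambda\Phi|\le Q_{s(-\A)}|\Phi|$ gives $|\Phi|\le Q_{s(-\A)}|\Phi|$ with $r(Q_{s(-\A)})=1$. A standard argument for irreducible compact positive operators (the peripheral spectrum is a cyclic subgroup of the circle of radius $r$, and the eigenvalue $r$ is algebraically simple) then shows $|\Phi|$ is, up to scalar, the Perron eigenvector $u_{s(-\A)}$ of $Q_{s(-\A)}$ and $\Phi=e^{i\psi(\cdot)}u_{s(-\A)}$ for some measurable phase; plugging back in and using irreducibility of $b(a)\Pi(a,0)$ on a set of positive measure forces $e^{i\tau a}$ to be constant where it matters, hence $\tau=0$. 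Therefore $\lambda=s(-\A)$, which proves $\sigma_0(-\A)=\{s(-\A)\}$.

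I expect the main obstacle to be the last step, i.e. rigorously eliminating a nonzero imaginary part from a peripheral spectral value. The operator-theoretic domination $|Q_\lambda|\le Q_{\mathrm{Re}\,\lambda}$ only yields $\mathrm{Re}\,\lambda=s(-\A)$ via Lemma~\ref{L0}; the further reduction to $\tau=0$ genuinely requires the irreducibility hypothesis \eqref{14} together with the Perron–Frobenius/cyclicity structure of the peripheral point spectrum, and it is exactly this combination that \cite{Greiner84} and \cite[Prop.2.5]{Webb_TAMS_87} are designed to supply. Accordingly, rather than reconstructing that machinery, the cleanest route is to verify that the hypotheses of \cite[Prop.2.5]{Webb_TAMS_87} are met — positivity of $\{S(t)\}$ on the Banach lattice $\E_0$ from Theorem~\ref{Prop1}, the spectral-gap condition $s(-\A)>\omega_1(-\A)$ which via Lemma~\ref{p4} and Lemma~\ref{C2} places $s(-\A)$ among the poles of the resolvent outside the essential spectrum — and then quote that proposition.
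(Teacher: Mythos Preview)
Your final conclusion --- verify positivity of $\{S(t)\,;\,t\ge 0\}$ on the Banach lattice $\E_0$ and the hypothesis $s(-\A)>\omega_1(-\A)$, then invoke \cite[Prop.~2.5]{Webb_TAMS_87} --- is exactly what the paper does: it gives no argument at all and simply cites that proposition (which in turn rests on \cite{Greiner84}).

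The extended middle part of your proposal, where you try to argue directly via $Q_\lambda$, absolute-value domination, and irreducibility, is a detour you rightly abandon. Two remarks on it. First, Proposition~\ref{LLL} is a general statement about positive $C_0$-semigroups on Banach lattices and does not use the specific age-structure or assumption~\eqref{14}; the underlying mechanism is that the boundary spectrum of a positive semigroup is additively cyclic, while the condition $s(-\A)>\omega_1(-\A)$ forces $\sigma_0(-\A)$ to consist of finitely many poles, and a set of the form $s(-\A)+i\tau\mathbb{Z}$ is finite only if $\tau=0$. Irreducibility is not needed here --- it enters only later, in Lemma~\ref{L12}, to make $s(-\A)$ a \emph{simple} eigenvalue. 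Second, your step ``$e^{i\tau a}$ must be constant where it matters, hence $\tau=0$'' is heuristic and would require substantial additional work to make rigorous, as you note.
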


We now give a criterion for the spectral bound to be negative. Note that this result implies the global asymptotic stability of the trivial solution to \eqref{1}-\eqref{2}.

\begin{thm}\label{T1}
If  $r(Q_0)<1$, then $\omega(-\A)<0$.
\end{thm}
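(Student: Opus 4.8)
The plan is to show that the growth bound $\omega(-\A)$ is negative by combining the bound on the $\alpha$-growth bound from Lemma~\ref{p4}, the characterization of the point spectrum via $Q_\lambda$ from Lemma~\ref{L2}, the monotonicity of $\lambda\mapsto r(Q_\lambda)$ from Lemma~\ref{L0}, and the spectral mapping type result of Proposition~\ref{LLL}. The key idea is that, above the level $-\varpi$, the spectrum of $-\A$ consists only of isolated eigenvalues $\lambda$ characterized by the condition $1\in\sigma_p(Q_\lambda)$, while the hypothesis $r(Q_0)<1$ together with monotonicity forces all such eigenvalues to have negative real part.

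First I would recall from Lemma~\ref{p4} that $\omega_1(-\A)\le-\varpi<0$ (in fact $\omega_1(-\A)=-\infty$ when $a_m<\infty$). If $\omega(-\A)=\omega_1(-\A)$ we are already done, so we may assume $\omega(-\A)>\omega_1(-\A)$. It is a standard fact for strongly continuous semigroups that when $\omega(-\A)>\omega_1(-\A)$ the spectral bound satisfies $s(-\A)=\omega(-\A)$ and the peripheral spectrum is nonempty; in particular there is some $\lambda\in\sigma(-\A)$ with $\mathrm{Re}\,\lambda=s(-\A)=\omega(-\A)>\omega_1(-\A)\ge-\varpi$. Next I would argue that $s(-\A)<0$. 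Suppose for contradiction that $s(-\A)\ge0$. By Proposition~\ref{LLL} (applicable since $s(-\A)>\omega_1(-\A)$) the peripheral spectrum is the singleton $\{s(-\A)\}$, so $s(-\A)$ itself is a real spectral value of $-\A$ lying above $-\varpi$. By Lemma~\ref{C2} we then have $s(-\A)\in\sigma_p(-\A)$ and, by Lemma~\ref{L2}(i), $1\in\sigma_p(Q_{s(-\A)})$, hence $r(Q_{s(-\A)})\ge1$. But $s(-\A)\ge0$ and Lemma~\ref{L0} says $\lambda\mapsto r(Q_\lambda)$ is strictly decreasing on $I$, so $r(Q_{s(-\A)})\le r(Q_0)<1$ — a contradiction. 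Therefore $s(-\A)<0$, and since $\omega(-\A)=s(-\A)$ in this case, $\omega(-\A)<0$.

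I would then clean up the remaining possibility: if $\omega(-\A)=\omega_1(-\A)$, then $\omega(-\A)\le-\varpi<0$ directly by Lemma~\ref{p4}. Hence in all cases $\omega(-\A)<0$, which is the claim; the asymptotic stability of the trivial solution stated in the remark preceding the theorem follows because $\|S(t)\|\le Ce^{\omega t}$ for any $\omega>\omega(-\A)$, which can still be chosen negative.

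The main obstacle is the dichotomy around whether $\omega(-\A)$ equals the essential/$\alpha$-growth bound or is strictly larger: one must invoke the correct general semigroup theory ensuring that when $\omega(-\A)>\omega_1(-\A)$ there is genuinely a spectral point of $-\A$ with real part equal to $\omega(-\A)$ (so that $s(-\A)=\omega(-\A)$ and the peripheral spectrum is accessible through Proposition~\ref{LLL} and Lemma~\ref{C2}). Once that structural fact is in hand, the rest is the short monotonicity contradiction using $r(Q_0)<1$ and the strict decrease of $\lambda\mapsto r(Q_\lambda)$ from Lemma~\ref{L0}. A minor point to handle carefully is the case $a_m=\infty$, where one needs the eigenvalue produced to satisfy $\mathrm{Re}\,\lambda>-\varpi$ so that Lemmas~\ref{L2} and \ref{C2} apply; this is guaranteed precisely because such a $\lambda$ has real part $\omega(-\A)>\omega_1(-\A)\ge-\varpi$.
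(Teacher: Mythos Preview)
Your argument is correct and follows essentially the same route as the paper: both combine Lemma~\ref{p4}, Proposition~\ref{LLL}, Lemma~\ref{C2}/Lemma~\ref{L2}, and the monotonicity of $\lambda\mapsto r(Q_\lambda)$ from Lemma~\ref{L0} to produce the contradiction, the only structural difference being that the paper quotes the identity $\omega(-\A)=\max\{\omega_1(-\A),\sup_{\lambda\in\sigma(-\A)\setminus\sigma_e(-\A)}\mathrm{Re}\,\lambda\}$ explicitly while you phrase it as the dichotomy $\omega(-\A)=\omega_1(-\A)$ versus $s(-\A)=\omega(-\A)$. One small slip: in your last paragraph you write $\omega_1(-\A)\ge-\varpi$, but Lemma~\ref{p4} gives the opposite inequality; the step you actually need, namely $s(-\A)>-\varpi$, is already supplied by the contradiction hypothesis $s(-\A)\ge 0$ together with $\varpi>0$ when $a_m=\infty$.
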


\begin{proof}
Let $\hat{\lambda}_0:=s(-\A)$. Since $r(Q_0)<1$ there is $\delta>0$ such that $r(Q_{-\delta})<1$ by Lemma~\ref{L0}. Fix $\varepsilon\in (0, \delta)$ with $\varepsilon<\varpi$ if $a_m=\infty$, see \eqref{10}. Suppose there is $\lambda\in\C$ with $\mathrm{Re}\, \lambda \ge -\varepsilon$ and $1\in \sigma_p(Q_\lambda)$. Then $\lambda\in\sigma(-\A)$ due to Lemma~\ref{L2}, whence $\hat{\lambda}_0\ge -\varepsilon$. Since $\varepsilon<\varpi$ if $a_m=\infty$, Lemma~\ref{p4} and Proposition~\ref{LLL} entail \mbox{$\sigma_0(-\A)=\{\hat{\lambda}_0\}$}. Invoking  Lemma~\ref{L2} again we see that $1\in\sigma_p(Q_{\hat{\lambda}_0})$ and so $r(Q_{\hat{\lambda}_0})\ge 1$.  Lemma~\ref{L0} then gives  $\hat{\lambda}_0<-\delta$ contradicting $\hat{\lambda}_0\ge -\varepsilon$ and $\varepsilon <\delta$. Consequently,
\bqn\label{u}
\sup\big\{\mathrm{Re}\,\lambda\,;\, 1\in \sigma_p(Q_\lambda)\big\} \le -\varepsilon <0\ .
\eqn
Recall e.g. from \cite[Prop.4.13]{WebbBook} that
$$
\omega(-\A)=\max\big\{\omega_1(-\A), \sup_{ \lambda\in\sigma(-\A)\setminus \sigma_e(-\A)}   \mathrm{Re}\, \lambda\big\}\ .
$$
Since $\varpi>0$ if $a_m=\infty$, the assertion is an immediate consequence of \eqref{u}, Lemma~\ref{p4}, and Lemma~\ref{C2}.
\end{proof}

Next, we provide a criterion for asynchronous exponential growth of the semigroup $\{S(t);t\ge 0\}$ which is analogous to the scalar case $A\equiv \mu$ in \cite{WebbBook}. For similar results in the spatially inhomogeneous setting we refer to \cite{ThiemeDCDS,RhandiSchnaubelt_DCDS99}. We first need an auxiliary result.

\begin{lem}\label{L12}
Let $\lambda_0\in\R$ with $\lambda_0>-\varpi$ if $a_m=\infty$ be such that $r(Q_{\lambda_0})=1$. Then $\lambda_0$ is a simple eigenvalue of~$-\A$.
\end{lem}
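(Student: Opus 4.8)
The statement to prove is that if $\lambda_0\in\R$ (with $\lambda_0>-\varpi$ when $a_m=\infty$) satisfies $r(Q_{\lambda_0})=1$, then $\lambda_0$ is a simple eigenvalue of $-\A$, meaning both its geometric and algebraic multiplicities equal one. The strategy is to transfer everything to the operator $Q_{\lambda_0}$, about which Lemma~\ref{Q} already tells us a great deal. First I would establish geometric simplicity: by Lemma~\ref{Q}, since $r(Q_{\lambda_0})=1$ is the spectral radius of the compact irreducible positive operator $Q_{\lambda_0}$, it is a simple eigenvalue of $Q_{\lambda_0}$ in the usual Krein--Rutman sense (geometrically and algebraically simple, with a quasi-interior eigenvector). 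In particular $1\in\sigma_p(Q_{\lambda_0})$ with geometric multiplicity one, so by Lemma~\ref{L2}(i) we get $\lambda_0\in\sigma_p(-\A)$ with geometric multiplicity one. Moreover $\lambda_0\in\sigma(-\A)$ with $\mathrm{Re}\,\lambda_0>-\varpi$, so Lemma~\ref{C2} applies: $\lambda_0$ is isolated in $\sigma(-\A)$, a pole of the resolvent of order $m$, and $\mathrm{im}(P_{\lambda_0})=\ker(\lambda_0+\A)^m$. It remains to show $m=1$, equivalently that the algebraic multiplicity is one, equivalently (since the geometric multiplicity is already one) that there is no generalized eigenvector, i.e. $\ker(\lambda_0+\A)^2=\ker(\lambda_0+\A)$.

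The heart of the argument is this last point, and here I would reduce it to the corresponding statement for $Q_{\lambda_0}$. Suppose $\phi\in\ker(\lambda_0+\A)^2$ with $(\lambda_0+\A)\phi=:\psi\in\ker(\lambda_0+\A)\setminus\{0\}$; we must derive a contradiction. Since $\psi\in\ker(\lambda_0+\A)$, Theorem~\ref{C1} gives $\psi(a)=\Pi_{\lambda_0}(a,0)\psi(0)$ with $\psi(0)=Q_{\lambda_0}\psi(0)$, so $\psi(0)$ is a (quasi-interior, positive) eigenvector of $Q_{\lambda_0}$ for the eigenvalue $1$. Now $(\partial_a+A_{\lambda_0})\phi=\psi$ with $\phi\in\E_1$ and $\phi(0)=\int_0^{a_m}b(a)\phi(a)\,\rd a$; solving the Cauchy problem along characteristics (as in the proof of Lemma~\ref{L1} and Remark~\ref{R1}) gives
\[
\phi(a)=\Pi_{\lambda_0}(a,0)\phi(0)+(v_{\lambda_0}\psi)(a),\qquad
(1-Q_{\lambda_0})\phi(0)=\int_0^{a_m}b(a)\,(v_{\lambda_0}\psi)(a)\,\rd a=:g.
\]
So $g\in\mathrm{im}(1-Q_{\lambda_0})$. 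The plan is now to show $g\notin\mathrm{im}(1-Q_{\lambda_0})$, which is the contradiction. Because $1$ is an algebraically simple eigenvalue of the compact operator $Q_{\lambda_0}$, the Fredholm alternative gives the direct sum decomposition $E_0=\ker(1-Q_{\lambda_0})\oplus\mathrm{im}(1-Q_{\lambda_0})$, and $g\in\mathrm{im}(1-Q_{\lambda_0})$ iff $\langle f^*,g\rangle=0$, where $f^*\in E_0'$ spans $\ker(1-Q_{\lambda_0}')$ and can be chosen strictly positive (again by Krein--Rutman applied to the adjoint). So everything comes down to computing $\langle f^*,g\rangle$ and showing it is nonzero.

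The key identity is obtained by pairing $g$ with $f^*$ and using the adjoint eigenrelation to unfold $v_{\lambda_0}$; one finds, after interchanging the order of integration,
\[
\langle f^*,g\rangle
=\Big\langle f^*,\int_0^{a_m}b(a)\int_0^a\Pi_{\lambda_0}(a,\sigma)\,\psi(\sigma)\,\rd\sigma\,\rd a\Big\rangle.
\]
Since $\psi(\sigma)=\Pi_{\lambda_0}(\sigma,0)\psi(0)$ and $\Pi_{\lambda_0}(a,\sigma)\Pi_{\lambda_0}(\sigma,0)=\Pi_{\lambda_0}(a,0)$, the inner integral is $a\,\Pi_{\lambda_0}(a,0)\psi(0)$, so
\[
\langle f^*,g\rangle=\Big\langle f^*,\int_0^{a_m}a\,b(a)\,\Pi_{\lambda_0}(a,0)\,\rd a\;\psi(0)\Big\rangle
=-\Big\langle f^*,\frac{\rd}{\rd\lambda}Q_\lambda\Big|_{\lambda=\lambda_0}\psi(0)\Big\rangle,
\]
using $\tfrac{\rd}{\rd\lambda}\Pi_\lambda(a,0)=-a\,\Pi_\lambda(a,0)$. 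By the standard perturbation formula for a simple eigenvalue, $\tfrac{\rd}{\rd\lambda}r(Q_\lambda)\big|_{\lambda_0}=\langle f^*,\tfrac{\rd}{\rd\lambda}Q_\lambda|_{\lambda_0}\psi(0)\rangle/\langle f^*,\psi(0)\rangle$, so $\langle f^*,g\rangle=-\langle f^*,\psi(0)\rangle\,\tfrac{\rd}{\rd\lambda}r(Q_\lambda)\big|_{\lambda_0}$. Now $\langle f^*,\psi(0)\rangle>0$ since $f^*>0$ and $\psi(0)$ is quasi-interior, and $\tfrac{\rd}{\rd\lambda}r(Q_\lambda)\big|_{\lambda_0}<0$ because $\lambda\mapsto r(Q_\lambda)$ is strictly decreasing (Lemma~\ref{L0})—one has to note it is moreover differentiable near $\lambda_0$, which follows from analyticity of $\lambda\mapsto Q_\lambda$ together with simplicity of the eigenvalue. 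Hence $\langle f^*,g\rangle\neq 0$, contradicting $g\in\mathrm{im}(1-Q_{\lambda_0})$. Therefore $\ker(\lambda_0+\A)^2=\ker(\lambda_0+\A)$, $m=1$, and $\lambda_0$ is a simple eigenvalue of $-\A$.

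The main obstacle I anticipate is making the eigenvalue-perturbation step fully rigorous at the level stated in the paper: one must verify that $\lambda\mapsto Q_\lambda$ is analytic (or at least $C^1$) into $\ml(E_0)$ near $\lambda_0$—using \eqref{rp}, \eqref{10aa}, and \eqref{13} to differentiate under the integral sign—and that algebraic simplicity of $1$ as an eigenvalue of $Q_{\lambda_0}$ (needed for the clean splitting $E_0=\ker(1-Q_{\lambda_0})\oplus\mathrm{im}(1-Q_{\lambda_0})$ and for the perturbation formula) is genuinely available. The latter is part of the classical Krein--Rutman statement for compact irreducible positive operators already cited in the proof of Lemma~\ref{Q}, so it may be invoked directly; the former is a routine but slightly tedious dominated-convergence argument that I would state without carrying out in detail. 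A cleaner packaging, if one prefers to avoid explicit perturbation formulas, is to argue abstractly: a chain $\ker(\lambda_0+\A)\subsetneq\ker(\lambda_0+\A)^2$ would force $g=(1-Q_{\lambda_0})\phi(0)$ with $g\in\mathrm{im}(1-Q_{\lambda_0})$ and a "positivity of $g$ relative to the cone" argument (since $b\ge0$, $\Pi_{\lambda_0}\ge0$, $\psi\ge0$ after normalization) to conclude $g$ has a strictly positive pairing with $f^*$; this is essentially the same computation organized around positivity rather than differentiability, and either route finishes the proof.
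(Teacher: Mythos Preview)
Your argument is correct and follows essentially the same skeleton as the paper's: both reduce geometric simplicity to Lemma~\ref{Q} via Lemma~\ref{L2}(i), assume a Jordan chain of length two, solve the Cauchy problem $(\partial_a+A_{\lambda_0})\phi=\psi$ to arrive at
\[
(1-Q_{\lambda_0})\,\phi(0)\;=\;\int_0^{a_m} a\,b(a)\,\Pi_{\lambda_0}(a,0)\,\Phi_0\ \rd a
\]
(after normalizing $\psi(0)$ to the quasi-interior eigenvector $\Phi_0$), and then derive a contradiction from positivity. The only packaging difference is in this last step: the paper first adds a large multiple of the eigenfunction so that the trace becomes a \emph{positive} element $q(0)$, and then invokes \cite[Cor.~12.4]{DanersKochMedina} (which says $(1-Q_{\lambda_0})x$ cannot be positive and nonzero for positive $x$ when $r(Q_{\lambda_0})=1$); you instead pair with the strictly positive dual eigenfunctional $f^*$ and observe the right-hand side has strictly positive pairing while anything in $\mathrm{im}(1-Q_{\lambda_0})$ pairs to zero. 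These are equivalent applications of Krein--Rutman.

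One caution about your primary route through the perturbation formula: you conclude $\tfrac{\rd}{\rd\lambda}r(Q_\lambda)\big|_{\lambda_0}<0$ from strict monotonicity (Lemma~\ref{L0}) together with analyticity, but a real-analytic strictly decreasing function can still have vanishing derivative at an isolated point (e.g.\ $\lambda\mapsto -\lambda^3$). The fix is immediate and already implicit in your own computation: from your formula
\[
\frac{\rd}{\rd\lambda}r(Q_\lambda)\Big|_{\lambda_0}
=\frac{\big\langle f^*,\tfrac{\rd}{\rd\lambda}Q_\lambda\big|_{\lambda_0}\Phi_0\big\rangle}{\langle f^*,\Phi_0\rangle}
=-\,\frac{\big\langle f^*,\int_0^{a_m} a\,b(a)\,\Pi_{\lambda_0}(a,0)\,\Phi_0\,\rd a\big\rangle}{\langle f^*,\Phi_0\rangle}
\]
the sign follows directly by positivity (the numerator integrand is positive and nonzero on a set of positive measure by \eqref{14}), without any appeal to monotonicity. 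This is precisely your ``cleaner packaging'' remark, and it is the route the paper takes.
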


\begin{proof}
Referring to Lemma~\ref{Q} there is a quasi-interior eigenvector $\Phi_0\in E_1$ of $Q_{\lambda_0}$ corresponding to the simple eigenvalue  $r(Q_{\lambda_0})=1$. By Theorem~\ref{C1} and Lemma~\ref{L2}, $\mathrm{ker}(\lambda_0+\A)$ is one-dimensional and spanned by \mbox{$\varphi:=\Pi_{\lambda_0}(\cdot,0)\Phi_0$}. It thus remains to show that
$\mathrm{ker}(\lambda_0+\A)^2\subset \mathrm{ker}(\lambda_0+\A)$. Let \mbox{$\psi\in\mathrm{ker}(\lambda_0+\A)^2$} and set $$\phi:=(\lambda_0+\A)\psi\in \mathrm{ker}(\lambda_0+\A)\ .$$ Then $\phi=\xi\varphi$ for some $\xi\in\R$. Suppose $\xi\not= 0$, so without loss of generality $\xi>0$. Let $\tau>0$ be such that $\tau\Phi_0+\psi(0)\in E_\varsigma^+\setminus\{0\}$ and put $q:=\tau\varphi+\psi\in\mathrm{dom}(-\A)$. Then $(\lambda_0+\A)q=\phi$ and, from Theorem~\ref{C1},
$$
q(a)=\Pi_{\lambda_0}(a,0) q(0)+\xi\int_0^a\Pi_{\lambda_0}(a,\sigma)\,\Pi_{\lambda_0}(\sigma,0)\,\Phi_0\ \rd \sigma= \Pi_{\lambda_0}(a,0) q(0)+ a\,\xi\, \Pi_{\lambda_0}(a,0)\,\Phi_0
$$
and
$$
q(0)=\int_0^{a_m} b(a)\, q(a)\ \rd a \ .
$$
Plugging the former into the second formula yields
$$
(1-Q_{\lambda_0}) q(0)=\xi \int_0^{a_m} b(a)\, a\, \Pi_{\lambda_0}(a,0)\,\Phi_0\ \rd a\ .
$$
As $q(0)$ and the right hand side are both positive and nonzero, we derive from \cite[Cor.12.4]{DanersKochMedina} a contradiction to \mbox{$r(Q_{\lambda_0})=1$}. Consequently, $\xi=0$ and the claim follows because now $\phi=0$.
\end{proof}

Exponential asynchronous growth of the semigroup  $\{S(t); t\ge 0\}$ given in \eqref{100} is now an easy consequence of~\cite{Webb_TAMS_87}. Recall from Lemma~\ref{C2} that $P_{\lambda_0}:\E_0\rightarrow \mathrm{ker}(\lambda_0+\A)$ is a projection with rank one.\

\begin{thm}\label{C14}
Suppose that $r(Q_\alpha)>1$ for some $\alpha\in\R$ with  $\alpha>-\varpi$ if $a_m=\infty$.
Then $\{S(t)\,;\,  t\ge 0 \}$ has asynchronous exponential growth with intrinsic growth constant $\lambda_0$, that is,
$$
e^{-\lambda_0 t}\, S(t)\longrightarrow P_{\lambda_0}\quad\text{in}\quad \ml(\E_0)\quad \text{as}\quad  t\rightarrow\infty\ ,
$$
where $\lambda_0>\alpha$ is the unique number satisfying $r(Q_{\lambda_0})=1$.
\end{thm}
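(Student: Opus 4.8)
The plan is to reduce the statement to the abstract criterion for asynchronous exponential growth established by Webb \cite{Webb_TAMS_87}, which requires three ingredients for a positive strongly continuous semigroup on a Banach lattice: first, that the spectral bound strictly exceeds the essential (or $\alpha$-) growth bound, $s(-\A)>\omega_1(-\A)$; second, that $s(-\A)$ is a pole of the resolvent of first order (equivalently, a simple eigenvalue of $-\A$) with the associated spectral projection having rank one and positive range; and third, identification of the intrinsic growth constant with $s(-\A)$. So the proof will consist of locating $s(-\A)$, verifying it equals the unique $\lambda_0$ with $r(Q_{\lambda_0})=1$, and checking simplicity.

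First I would establish existence and uniqueness of $\lambda_0$. By Lemma~\ref{L0}, $\lambda\mapsto r(Q_\lambda)$ is continuous, strictly decreasing on $I$, tends to $0$ as $\lambda\to\infty$, and by hypothesis exceeds $1$ at $\alpha$; hence there is a unique $\lambda_0>\alpha$ (in particular $\lambda_0>-\varpi$ if $a_m=\infty$) with $r(Q_{\lambda_0})=1$. Next I would show $s(-\A)=\lambda_0$. By Lemma~\ref{L2}(i), $\lambda_0\in\sigma_p(-\A)$ since $1=r(Q_{\lambda_0})\in\sigma_p(Q_{\lambda_0})$, so $s(-\A)\ge\lambda_0>-\varpi$. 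Conversely, suppose some $\lambda\in\sigma(-\A)$ has $\mathrm{Re}\,\lambda>\lambda_0$; then $\mathrm{Re}\,\lambda>-\varpi$, so Lemma~\ref{C2} gives $1\in\sigma_p(Q_\lambda)$, whence $r(Q_\lambda)\ge 1$. Comparing real parts and using strict monotonicity of $r(Q_\cdot)$ from Lemma~\ref{L0} (applied at the real point $\mathrm{Re}\,\lambda$, noting $r(Q_{\mathrm{Re}\,\lambda})\ge r(Q_\lambda)$ by the standard inequality $r(Q_\lambda)\le r(Q_{\mathrm{Re}\,\lambda})$ for these positive operators) yields $r(Q_{\mathrm{Re}\,\lambda})\ge 1=r(Q_{\lambda_0})$, forcing $\mathrm{Re}\,\lambda\le\lambda_0$, a contradiction. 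Hence $s(-\A)=\lambda_0$.

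Then I would assemble Webb's hypotheses. Since $\lambda_0>-\varpi\ge\omega_1(-\A)$ by Lemma~\ref{p4}, we have $s(-\A)>\omega_1(-\A)$; Proposition~\ref{LLL} then gives $\sigma_0(-\A)=\{\lambda_0\}$, so $\lambda_0$ is isolated in the peripheral spectrum. By Lemma~\ref{C2}, $\lambda_0\in\sigma_p(-\A)\setminus\sigma_e(-\A)$ is isolated in all of $\sigma(-\A)$ and a pole of the resolvent of some finite order $m$, with spectral projection $P_{\lambda_0}$ of finite rank and $\mathrm{im}(P_{\lambda_0})=\mathrm{ker}(\lambda_0+\A)^m$. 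Lemma~\ref{L12} upgrades this: $\lambda_0$ is a \emph{simple} eigenvalue, i.e.\ $m=1$ and $\dim\mathrm{ker}(\lambda_0+\A)=1$, so $P_{\lambda_0}$ has rank one; by Lemma~\ref{Q} and Lemma~\ref{L12} the spanning eigenvector $\varphi=\Pi_{\lambda_0}(\cdot,0)\Phi_0$ is positive (quasi-interior), and positivity of the semigroup forces the rank-one projection $P_{\lambda_0}$ to be positive as well. All hypotheses of \cite[Thm. (say) 2.x]{Webb_TAMS_87} are thus met, yielding $e^{-\lambda_0 t}S(t)\to P_{\lambda_0}$ in $\ml(\E_0)$ as $t\to\infty$ with intrinsic growth constant $\lambda_0$.

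The main obstacle I anticipate is the second part of the monotonicity argument: passing from a complex spectral point $\lambda$ of $-\A$ to a real one in order to invoke the strict decrease of $r(Q_\cdot)$. One must justify the inequality $r(Q_\lambda)\le r(Q_{\mathrm{Re}\,\lambda})$ — this follows because $|\Pi_\lambda(a,0)x|\le\Pi_{\mathrm{Re}\,\lambda}(a,0)|x|$ pointwise by positivity of $\Pi$ and $|e^{-\lambda a}|=e^{-\mathrm{Re}\,\lambda\, a}$, so $Q_\lambda$ is dominated in modulus by the positive operator $Q_{\mathrm{Re}\,\lambda}$, and domination of a compact irreducible positive operator controls spectral radius. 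A minor secondary point is confirming that $P_{\lambda_0}$ is positive: since its range is the one-dimensional span of the positive eigenvector $\varphi$ and $P_{\lambda_0}=\lim_{t\to\infty}e^{-\lambda_0 t}S(t)$ is a limit of positive operators, positivity is immediate. Everything else is a direct citation chain through the lemmas already proved.
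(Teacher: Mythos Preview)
Your proposal is correct and follows the same overall strategy as the paper: verify the hypotheses of \cite[Prop.~2.3]{Webb_TAMS_87} by establishing $s(-\A)=\lambda_0>\omega_1(-\A)$ and that $\lambda_0$ is a simple pole via Lemma~\ref{L12} and Lemma~\ref{C2}.

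There is one minor tactical difference worth noting. To show $s(-\A)\le\lambda_0$, you argue directly with a complex spectral point $\lambda$ and invoke the domination inequality $r(Q_\lambda)\le r(Q_{\mathrm{Re}\,\lambda})$. This is valid (your justification via $|Q_\lambda x|\le Q_{\mathrm{Re}\,\lambda}|x|$ in the complexified lattice is standard), but the paper sidesteps it: after observing $\hat\lambda_0:=s(-\A)\ge\lambda_0>\omega_1(-\A)$, it applies Proposition~\ref{LLL} \emph{first} to obtain $\sigma_0(-\A)=\{\hat\lambda_0\}$, so that $\hat\lambda_0$ is itself a \emph{real} spectral point; then Lemma~\ref{C2} gives $r(Q_{\hat\lambda_0})\ge 1$ and the real monotonicity of Lemma~\ref{L0} alone forces $\hat\lambda_0=\lambda_0$. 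This avoids any discussion of complex $\lambda$. Your route is equally rigorous but costs you the extra domination lemma you flag as the ``main obstacle.''

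One small caution: your remark that $P_{\lambda_0}$ is positive because ``$P_{\lambda_0}=\lim_{t\to\infty}e^{-\lambda_0 t}S(t)$'' is circular, since that limit is precisely the conclusion you are proving. This does no damage, however, because positivity of $P_{\lambda_0}$ is not an input to Webb's criterion (and the paper does not verify it either); if you want it independently, use instead $P_{\lambda_0}=\lim_{\lambda\downarrow\lambda_0}(\lambda-\lambda_0)(\lambda+\A)^{-1}$ with the resolvent positive for real $\lambda>\lambda_0$.
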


\begin{proof}
By Lemma~\ref{L0} there is a unique $\lambda_0>\alpha$ such that $r(Q_{\lambda_0})=1$. Let $\hat{\lambda}_0:=s(-\A)$ denote the spectral bound of $-\A$. Then, owing to Lemma~\ref{Q} and Lemma~\ref{C2}, \mbox{$\hat{\lambda}_0\ge \lambda_0$} and so \mbox{$\hat{\lambda}_0>\omega_1(-\A)$} according to Lemma~\ref{p4} since $\lambda_0>-\varpi$ if $a_m=\infty$ or $\omega_1(-\A)=-\infty$ if $a_m<\infty$. Thus, \mbox{$\sigma_0(-\A)=\{\hat{\lambda}_0\}$} by Proposition~\ref{LLL}, and then $1\in\sigma_p(Q_{\hat{\lambda}_0})$ by Lemma~\ref{C2} from which $1\le r(Q_{\hat{\lambda}_0})$. However, due to $\hat{\lambda}_0\ge \lambda_0$ and Lemma~\ref{L0} we have \mbox{$r(Q_{\hat{\lambda}_0})\le r(Q_{\lambda_0})=1$}, whence $\hat{\lambda}_0=\lambda_0$. Consequently,  $\sigma_0(-\A)=\{\lambda_0\}$ and $\lambda_0>\omega_1(-\A)$. Finally, Lemma~\ref{L12} together with Lemma~\ref{C2} imply that $\lambda_0$ is a simple pole of the resolvent $(\tau+\A)^{-1}$. The assertion then follows from \cite[Prop.2.3]{Webb_TAMS_87}.
\end{proof}

To derive a formula for the projection $P_{\lambda_0}:\E_0\rightarrow \mathrm{ker}(\lambda_0+\A)$ from  Theorem~\ref{C14} observe that there is a quasi-interior element $\Phi_0$ in $E_1$ such that $\mathrm{ker}(1-Q_{\lambda_0})=\mathrm{span}\{\Phi_0\}$ and $\mathrm{ker}(\lambda_0+\A)=\mathrm{span}\{\Pi_{\lambda_0}(\cdot,0)\Phi_0\}$.
Let $\phi\in\E_0$ be fixed and let $c(\phi)\in\R$ be such that $P_{\lambda_0}\phi=c(\phi)\Pi_{\lambda_0}(\cdot,0)\Phi_0$. Recall that $\lambda_0$ is a simple pole of the resolvent $ (\tau+\A)^{-1}$. Since $v_\lambda$ is holomorphic in $\lambda$, it follows from \eqref{490} and the Residue Theorem that 
$$
P_{\lambda_0}\phi=\lim_{\lambda\rightarrow\lambda_0}(\lambda-\lambda_0)\Pi_\lambda(\cdot,0) (1-Q_\lambda)^{-1} H_\lambda \phi\ ,\qquad H_\lambda\phi:=\int_0^{a_m}b(s)\, \int_0^s\Pi_\lambda(s,\sigma)\,\phi(\sigma)\ \rd\sigma\,\rd s\ .
$$
Let $w^*\in E_0^*$ be a positive eigenfunctional of the dual operator $Q_{\lambda_0}^*$ of $Q_{\lambda_0}$ corresponding to the eigenvalue $r(Q_{\lambda_0})=1$. Then, for $f^*\in\E_0^*$ defined by
$$
\langle f^*,\psi\rangle:=\langle w^*,\int_0^{a_m} b(a)\psi(a)\rd a\rangle\ ,\quad \psi\in\E_0\ ,
$$
we have, due to $Q_{\lambda_0}^*w^*=w^*$, that
\bqnn
\begin{split}
c(\phi)\langle w^*,\Phi_0\rangle &=\langle f^*,P_{\lambda_0}\phi\rangle = \lim_{\lambda\rightarrow\lambda_0}\langle f^*, (\lambda-\lambda_0)\Pi_\lambda(\cdot,0) (1-Q_\lambda)^{-1} H_\lambda \phi\rangle\\
&= \lim_{\lambda\rightarrow\lambda_0}\langle w^*, (\lambda-\lambda_0)(Q_\lambda-1+1) (1-Q_\lambda)^{-1} H_\lambda \phi\rangle\\
&
= \lim_{\lambda\rightarrow\lambda_0}\langle w^*, (\lambda-\lambda_0) (1-Q_\lambda)^{-1} H_\lambda \phi\rangle \ .
\end{split}
\eqnn
Writing
\bqn\label{ii}
H_\lambda\phi=d\big(H_\lambda\phi\big)\Phi_0 \oplus (1-Q_{\lambda_0})g\big(H_\lambda\phi\big)
\eqn
according to the decomposition $E_0=\R\cdot \Phi_0\oplus \mathrm{rg}(1-Q_{\lambda_0})$, it follows
$$
\lim_{\lambda\rightarrow\lambda_0}\langle w^*, (\lambda-\lambda_0) (1-Q_\lambda)^{-1} H_\lambda \phi\rangle=
d\big(H_{\lambda_0}\phi\big) \lim_{\lambda\rightarrow\lambda_0}\langle w^*, (\lambda-\lambda_0) (1-Q_\lambda)^{-1} \Phi_0\rangle\ 
$$
since $Q_\lambda$ is continuous in $\lambda$.
But, from \eqref{ii},
$$
\langle w^*,H_{\lambda_0}\phi\rangle =d\big(H_{\lambda_0}\phi\big)\, \langle w^*, \Phi_0\rangle
$$
since $Q_{\lambda_0}^*w^*=w^*$, whence $d\big(H_{\lambda_0}\phi\big)=\xi \langle w^*,H_{\lambda_0}\phi\rangle$ with $\xi^{-1}= \langle w^*, \Phi_0\rangle$. Similarly, decomposing $$Z_\lambda:=(\lambda-\lambda_0) (1-Q_\lambda)^{-1} \Phi_0$$ we  find
$$
\lim_{\lambda\rightarrow\lambda_0}\langle w^*,Z_\lambda\rangle =\big(\lim_{\lambda\rightarrow\lambda_0} d\big(Z_\lambda\big)\big) \langle w^*,\Phi_0\rangle.
$$
Gathering these observations, we derive
$$
c(\phi)\,\langle w^*,\Phi_0\rangle = C_0\, \langle w^*,H_{\lambda_0}\phi\rangle  \, \langle w^*, \Phi_0\rangle
$$
for some number $C_0$. Consequently,
$$
P_{\lambda_0}\phi= C_0\,\langle w^*,H_{\lambda_0}\phi\rangle\, \Pi_{\lambda_0}(\cdot,0)\Phi_0\ .
$$
Since $P_{\lambda_0}$ is a projection, i.e. $P_{\lambda_0}^2=P_{\lambda_0}$, the constant $C_0$ is easily computed and we obtain:

\begin{prop}\label{PPP}
Under the assumptions of Theorem~\ref{C14}, the projection $P_{\lambda_0}$ is given by
\bqn\label{PP}
P_{\lambda_0}\phi=\frac{\langle w^*,H_{\lambda_0}\phi\rangle}{\langle w^*,\int_0^{a_m} a b(a)\Pi_{\lambda_0}(a,0)\rd a\,\Phi_0\rangle}\Pi_{\lambda_0}(\cdot,0)\Phi_0\ 
\eqn
for $\phi\in\E_0$, where 
$$
 H_{\lambda_0}\phi=\int_0^{a_m}b(s)\, \int_0^s\Pi_{\lambda_0}(s,\sigma)\,\phi(\sigma)\ \rd\sigma\,\rd s
 $$
and $w^*\in E_0^*$ is a positive eigenfunctional to the dual operator $Q_{\lambda_0}^*$ of $Q_{\lambda_0}$ corresponding to the eigenvalue~$r(Q_{\lambda_0})=1$.
\end{prop}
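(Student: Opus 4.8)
The plan is to pin down the one quantity still unknown at this stage, namely the number $C_0$ in the identity $P_{\lambda_0}\phi=C_0\,\langle w^*,H_{\lambda_0}\phi\rangle\,\Pi_{\lambda_0}(\cdot,0)\Phi_0$ derived just above, by exploiting that $P_{\lambda_0}$ is the residue projection from Lemma~\ref{C2} and hence idempotent, $P_{\lambda_0}^2=P_{\lambda_0}$. The key computation is to evaluate $H_{\lambda_0}$ on the range vector $\Pi_{\lambda_0}(\cdot,0)\Phi_0$ itself: inserting $\phi(\sigma)=\Pi_{\lambda_0}(\sigma,0)\Phi_0$ into the definition of $H_{\lambda_0}$ and using the evolution operator identity $\Pi_{\lambda_0}(s,\sigma)\,\Pi_{\lambda_0}(\sigma,0)=\Pi_{\lambda_0}(s,0)$ collapses the inner integral, giving
$$
H_{\lambda_0}\big(\Pi_{\lambda_0}(\cdot,0)\Phi_0\big)=\int_0^{a_m}a\,b(a)\,\Pi_{\lambda_0}(a,0)\ \rd a\,\Phi_0\ ,
$$
whose pairing with $w^*$ is exactly the denominator $D:=\langle w^*,\int_0^{a_m}a\,b(a)\,\Pi_{\lambda_0}(a,0)\,\rd a\,\Phi_0\rangle$ appearing in \eqref{PP}.

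Next I would apply $P_{\lambda_0}$ once more. From the displayed formula for $P_{\lambda_0}\phi$ and the computation just made, $\langle w^*,H_{\lambda_0}(P_{\lambda_0}\phi)\rangle=C_0\,\langle w^*,H_{\lambda_0}\phi\rangle\,D$, hence $P_{\lambda_0}^2\phi=C_0^2 D\,\langle w^*,H_{\lambda_0}\phi\rangle\,\Pi_{\lambda_0}(\cdot,0)\Phi_0$. Since $r(Q_{\lambda_0})=1$ is an eigenvalue of $Q_{\lambda_0}$ (Lemma~\ref{Q}), Lemma~\ref{L2}(i) gives $\lambda_0\in\sigma_p(-\A)$, so the residue projection $P_{\lambda_0}$ is nonzero and there is $\phi\in\E_0$ with $\langle w^*,H_{\lambda_0}\phi\rangle\neq 0$; comparing $P_{\lambda_0}^2\phi$ with $P_{\lambda_0}\phi$ for such a $\phi$ forces $C_0^2 D=C_0$, whence $C_0\neq 0$ and $C_0 D=1$. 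Substituting $C_0=1/D$ back into $P_{\lambda_0}\phi=C_0\,\langle w^*,H_{\lambda_0}\phi\rangle\,\Pi_{\lambda_0}(\cdot,0)\Phi_0$ yields precisely \eqref{PP}.

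The only point needing a short verification is that $D\neq 0$, so that \eqref{PP} is meaningful. Writing $D=\int_0^{a_m}a\,\langle w^*,b(a)\,\Pi_{\lambda_0}(a,0)\,\Phi_0\rangle\ \rd a$, the integrand is nonnegative almost everywhere by positivity of $b(a)$, of $\Pi_{\lambda_0}(a,0)=e^{-\lambda_0 a}\Pi(a,0)$, of $\Phi_0$, and of $w^*$; and it is strictly positive on a set of positive measure, because $\int_0^{a_m}b(a)\,\Pi_{\lambda_0}(a,0)\,\Phi_0\ \rd a=Q_{\lambda_0}\Phi_0=\Phi_0\neq 0$ forces $b(a)\,\Pi_{\lambda_0}(a,0)\,\Phi_0\neq 0$ on such a set, while $w^*$ is strictly positive, being the eigenfunctional of the dual of the irreducible operator $Q_{\lambda_0}$ (cf.\ \eqref{14}). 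Hence $D>0$, consistent with $C_0 D=1$. As all the analytic preparation is already in place before the statement, this step is purely computational and I anticipate no genuine obstacle.
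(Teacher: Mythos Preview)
Your proposal is correct and follows exactly the approach the paper indicates: the paper derives $P_{\lambda_0}\phi=C_0\,\langle w^*,H_{\lambda_0}\phi\rangle\,\Pi_{\lambda_0}(\cdot,0)\Phi_0$ just before the proposition and then states that $C_0$ is ``easily computed'' from the idempotency $P_{\lambda_0}^2=P_{\lambda_0}$, which is precisely the computation you carry out via the evolution identity $\Pi_{\lambda_0}(s,\sigma)\Pi_{\lambda_0}(\sigma,0)=\Pi_{\lambda_0}(s,0)$. Your additional verification that $D>0$ is a welcome detail the paper leaves implicit.
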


Situations in which assumptions \eqref{10a}-\eqref{14} are satisfied occur, for instance, when $A$ is a second order elliptic operator in divergence form. We refer to  \cite[Sect.3]{WalkerSIMA09}, \cite[Sect.3]{WalkerAMPA11} for concrete examples and details. Therein one also finds simple settings in which the spectral radii $r(Q_\lambda)$ can be computed easily what allows one to apply Theorem~\ref{T1} and Theorem~\ref{C14}.

\end{document}